\newtheorem{theorem}{Theorem}
\newtheorem{acknowledgement}[theorem]{Acknowledgement}
\newtheorem{corollary}[theorem]{Corollary}
\newtheorem{definition}[theorem]{Definition}
\newtheorem{lemma}[theorem]{Lemma}
\newtheorem{remark}[theorem]{Remark}
\newenvironment{proof}[1][Proof]{\noindent\textbf{#1.} }{\ \rule{0.5em}{0.5em}}
\begin{document}

\begin{center}
\textbf{{\Large {SPECIAL FUNCTIONS RELATED TO DEDEKIND TYPE DC-SUMS AND
THEIR APPLICATIONS}}}

\bigskip

{\Large 
}

Yilmaz SIMSEK

\bigskip

Akdeniz University, Faculty of Arts and Science, Department of Mathematics
07058 Antalya, Turkey

ysimsek@akdeniz.edu.tr

\bigskip

{\Large \ }
\end{center}

\textbf{{\Large {Abstract }}}In this paper we construct trigonometric
functions of the sum $T_{p}(h,k)$, which is called Dedekind type DC-(Dahee
and Changhee) sums. We establish analytic properties of this sum. We find
trigonometric representations of this sum. We prove reciprocity theorem of
this sums. Furthermore, we obtain relations between the Clausen functions,
Polylogarithm function, Hurwitz zeta function, generalized Lambert series ($G
$-series), Hardy-Berndt sums and the sum $T_{p}(h,k)$. We also give some
applications related to these sums and functions.

\bigskip

\noindent \textbf{2000 Mathematics Subject Classification. }11F20, 11B68,
11M35, 11M41, 11S80, 33C10, 33E20.

\noindent \textbf{Key Words and Phrases. }Dedekind sums, Hardy-Berndt sums,
Bernoulli Functions, Euler Functions, Euler numbers and polynomials, Riemann
zeta function, Hurwitz zeta function, Lerch zeta function, Dirichlet series
for the polylogarithm function, Dirichlet's eta function, Legendre chi
function, trigonometric functions and generalized Lambert series.

\section{Introduction, Definitions and Notations}

In this section, we give some definitions, notations and results related to
the Dedekind sums. Firstly we start with the definition of the classical
Dedekind sums.

Let%
\begin{equation*}
\left( \left( x\right) \right) =\left\{ 
\begin{array}{ll}
x-[x]_{G}-\frac{1}{2}\text{,} & \text{if }x\notin \mathbb{Z} \\ 
0\text{,} & \text{if }x\in \mathbb{Z}\text{,}%
\end{array}%
\right.
\end{equation*}%
$[x]_{G}$ being the largest integer $\leq x$. Let $h$ and $k$ be coprime
integers with $k>0$, the classical Dedekind sum $s(h,k)$ is defined as
follows%
\begin{equation*}
s\left( h,k\right) =\sum_{a=1}^{k-1}\left( \left( \frac{a}{k}\right) \right)
\left( \left( \frac{ha}{k}\right) \right) .
\end{equation*}

The reciprocity law of the classical Dedekind sums is given by%
\begin{equation*}
s(h,k)+s(k,h)=-\frac{1}{4}+\frac{1}{12}\left( \frac{h}{k}+\frac{k}{h}+\frac{1%
}{hk}\right) ,
\end{equation*}%
where $(h,k)=1$ and $h,k\in \mathbb{N}:=\left\{ 1,2,3,...\right\} $, and $%
\mathbb{N}_{0}:=\mathbb{N\cup }\left\{ 0\right\} $.

The classical Dedekind sums $s(h,k)$ arise in the transformation formula the
Dedekind-eta function. By using this transformation formula, Dedekind proved
reciprocity law of the classical Dedekind sums cf. \cite{Dedekind}. For
other proofs of reciprocity law of the classical Dedekind sums, see cf. (%
\cite{GroswaldRademacher}, \cite{HanceRademacher}, \cite{Apostol}, \cite%
{Apostol 1952}, \cite{Berndt-76}-\cite{Berndt1978}, \cite{Dieter}, \cite%
{Groswald}, \cite{S. Iseki}), see also the references cited in each of these
earlier works.

In the literature of the Dedekind sums, there are several generalizations of
the Dedekind sums that involve higher order Bernoulli functions and Euler
functions, the reader should look at \cite{GroswaldRademacher}, \cite%
{Apostol}, \cite{Berndt-76}-\cite{Berndt1978}, \cite{Dieter}, \cite{bayad2}, 
\cite{Beck} and \cite{TKim DC-sum} for references and, see also the
references cited in each of these earlier works.

In 1950, Apostol (\cite{Apostol}, \cite{Apostol 1952}) generalized Dedekind
sums as follows:%
\begin{equation}
S_{p}(h,k)=\sum_{a\func{mod}k}\frac{a}{k}\overline{B}_{p}(\frac{ah}{k}),
\label{b0-1}
\end{equation}%
where $(h,k)=1$ and $h,k\in \mathbb{N}$ and $\overline{B}_{p}(x)$ is the $p$%
-th Bernoulli function, which is defined as follows:%
\begin{align}
\overline{B}_{p}(x)& =B_{p}(x-[x]_{G})  \label{bo-2} \\
& =-p!\left( 2\pi i\right) ^{-p}\sum_{\QATOP{m=-\infty }{m\not=0}}^{\infty
}m^{-p}e^{2\pi imx},  \notag
\end{align}%
where $B_{p}(x)$ is the usual Bernoulli polynomials, which are defined by
means of the following generating function%
\begin{equation*}
\frac{te^{tz}}{e^{t}-1}=\sum_{n=0}^{\infty }B_{n}(z)\frac{t^{n}}{n!}\text{, }%
\left\vert t\right\vert <2\pi
\end{equation*}%
where $B_{n}(0)=B_{n}$ is denoted the Bernoulli number cf. \cite{Apostol}-%
\cite{S1}.

Observe that when $p=1,$ the sums $S_{1}(h,k)$ are known as the classical
Dedekind sums, $s(h,k)$.

The following theorem proved by Apostol \cite{Apostol}:

\begin{theorem}
\label{Thorem-1}Let $(h,k)=1$. For odd $p\geq 1$, we have 
\begin{equation}
S_{p}(h,k)=\frac{p!}{(2\pi i)^{p}}\sum_{%
\begin{array}{c}
m=1 \\ 
m\not\equiv 0(k)%
\end{array}%
}^{\infty }\frac{1}{m^{p}}(\frac{e^{2\pi imh/k}}{1-e^{2\pi imh/k}}-\frac{%
e^{-2\pi imh/k}}{1-e^{-2\pi imh/k}}).  \label{Eq-17}
\end{equation}
\end{theorem}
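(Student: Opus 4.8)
The plan is to substitute the Fourier expansion (\ref{bo-2}) of $\overline{B}_{p}$ directly into the defining sum (\ref{b0-1}) and then carry out the inner summation over the residues $a$ in closed form. Taking the complete residue system $a=0,1,\dots ,k-1$ (the term $a=0$ drops out because of the factor $a/k$), I would write
\begin{equation*}
S_{p}(h,k)=-\frac{p!}{(2\pi i)^{p}}\sum_{\substack{ m=-\infty \\ m\neq 0}}^{\infty }\frac{1}{m^{p}}\left( \frac{1}{k}\sum_{a=1}^{k-1}a\,e^{2\pi imah/k}\right) .
\end{equation*}
The structural observation that makes this manipulation painless is that the sum over $a$ is \emph{finite}: interchanging a finite sum with the (symmetric) partial sums of the Fourier series is automatically legitimate, so no appeal to absolute convergence of the double series is needed. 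This is the step I would treat most carefully in the boundary case $p=1$, where (\ref{bo-2}) converges only conditionally and must be read as a principal-value limit $\lim_{M\to \infty }\sum_{0<|m|\leq M}$; all subsequent cancellations should then be understood at the level of these symmetric partial sums.

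Next I would evaluate the inner sum. Writing $\zeta =e^{2\pi imh/k}$, one has $\zeta ^{k}=1$. When $k\nmid m$, coprimality $(h,k)=1$ forces $\zeta \neq 1$, and differentiating the finite geometric series $\sum_{a=0}^{k-1}\zeta ^{a}=0$ yields the clean identity $\frac{1}{k}\sum_{a=1}^{k-1}a\,\zeta ^{a}=\frac{1}{\zeta -1}$. When $k\mid m$ one instead has $\zeta =1$, and the inner sum collapses to the constant $\frac{k-1}{2}$.

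The final paragraph is where the hypothesis that $p$ is odd enters, and it enters twice. First, the terms with $k\mid m$ contribute a multiple of $\sum_{n\neq 0}(kn)^{-p}$, and since $\sum_{n\neq 0}n^{-p}=0$ for odd $p$ (the term $n$ cancels the term $-n$ within each symmetric partial sum), this entire block vanishes. Second, in the surviving sum over $k\nmid m$ I would pair $m$ with $-m$; using $(-m)^{-p}=-m^{-p}$ for odd $p$ folds the two-sided sum into $\sum_{m\geq 1,\,k\nmid m}$ with bracket $\frac{1}{1-e^{2\pi imh/k}}-\frac{1}{1-e^{-2\pi imh/k}}$. A final cosmetic application of the identity $\frac{e^{x}}{1-e^{x}}=\frac{1}{1-e^{x}}-1$ (the two $-1$'s cancel between the $\pm $ terms) rewrites this bracket in exactly the form displayed in (\ref{Eq-17}), completing the argument. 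The only genuine obstacle is the bookkeeping of conditional convergence when $p=1$; everything else is a finite computation together with the two parity cancellations.
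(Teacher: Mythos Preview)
Your argument is correct and is precisely Apostol's original derivation: insert the Fourier expansion of $\overline{B}_{p}$, evaluate the geometric sum $\frac{1}{k}\sum_{a=1}^{k-1}a\zeta^{a}=\frac{1}{\zeta-1}$ for $\zeta^{k}=1$, $\zeta\neq 1$, and exploit oddness of $p$ both to kill the $k\mid m$ block and to fold the two-sided sum into $m\geq 1$. The paper itself does not supply a proof of this theorem; it simply quotes it as a result of Apostol \cite{Apostol} and uses the same template later (e.g.\ the analogous finite exponential sums recalled from \cite{Berndt-Goldberg}, \cite{Goldberg-PhD} in Section~2) when treating the DC-sums, so your write-up is fully in line with the method the paper relies on.
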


In \cite{Apostol 1952}, Apostol established a connection between the sums $%
S_{p}(h,k)$ and certain finite sums involving Hurwitz zeta functions. By
using this relation, he proved the reciprocity law of the sum $S_{p}(h,k)$.

By using same motivation of the Dedekind sums,in this paper, we study on
infinite series representation of the Dedekind type DC-sum, reciprocity law
of this sum and some special functions.

In \cite{TKimAAA2008} and \cite{TKimASCM2008DCeuler}, Kim defined the 
\textit{first kind} $n$-th Euler function $\overline{E}_{m}(x)$ as follows:%
\begin{equation}
\overline{E}_{m}(x)=\frac{2(m!)}{(\pi i)^{m+1}}\sum_{%
\begin{array}{c}
n=-\infty  \\ 
n\not=0%
\end{array}%
}^{\infty }\frac{e^{(2n+1)\pi ix}}{(2n+1)^{m+1}},  \label{1}
\end{equation}%
where $m\in \mathbb{N}$. Hoffman \cite{Hoffman} studied on Fourier series of
Euler polynomials. He also expressed the values of Euler polynomials at any
rational argument in terms of $\tan x$ and $\sec x$.

Observe that if $0\leq x<1$, then (\ref{1}) reduces to the \textit{first kind%
} $n$-th Euler polynomials $E_{n}(x)$ which are defined by means of the
following generating function%
\begin{equation}
\frac{2e^{tx}}{e^{t}+1}=\sum_{n=0}^{\infty }E_{n}(x)\frac{t^{n}}{n!},\text{ }%
\left\vert t\right\vert <\pi .  \label{1a}
\end{equation}

Observe that $E_{n}(0)=E_{n}$ denotes the first kind Euler number which is
given by the following recurrence formula%
\begin{equation}
E_{0}=1\text{ and }E_{n}=-\sum_{k=0}^{n}\binom{n}{k}E_{k},  \label{1aa1}
\end{equation}%
Some of them are given by $1$, $-\frac{1}{2}$, $0$, $\frac{1}{4}$, $\cdots $%
, $E_{n}=2^{n}E_{n}(\frac{1}{2})$\ and $E_{2n}=0$, ($n\in \mathbb{N}$)\ cf. (%
\cite{TKim2002RJMPVolkenborn}-\cite{TKim BernoulliDCeuler}, \cite{Hoffman}, 
\cite{OzdenSimsekAML}, \cite{simjnttwEul}, \cite{SimsekNA2009}, \cite%
{YsimsekASCM-2008}) and see also the references cited in each of these
earlier works.

In \cite{TKimASCM2008DCeuler} and \cite{TKimAAA2008}, by using Fourier
transform for the Euler function, Kim derived some formulae related to
infinite series and the first kind Euler numbers. For example, (\ref{1}), and%
\begin{equation}
\sum_{n=1}^{\infty }\frac{1}{(2n+1)^{2m+2}}=\frac{(-1)^{m+1}\pi
^{2m+2}E_{2m+1}}{4(2m+1)!}.  \label{1bb}
\end{equation}%
Kim \cite{TKimAAA2008} gave the following results:

\begin{subequations}
\begin{equation}
\sec hx=\frac{1}{\cosh x}=\frac{2e^{x}}{e^{2x}+1}=\sum_{n=0}^{\infty
}E_{n}^{\ast }\frac{t^{n}}{n!},\text{ }\left\vert t\right\vert <\frac{\pi }{2%
},  \label{1b}
\end{equation}%
where $E_{m}^{\ast }$\ is denoted the \textit{second kind} Euler numbers. By
(\ref{1a}) and (\ref{1b}), it is easy to see that 
\end{subequations}
\begin{equation*}
E_{m}^{\ast }=\sum_{n=0}^{m}\binom{m}{n}2^{n}E_{n},
\end{equation*}%
and%
\begin{equation*}
E_{2m}^{\ast }=-\sum_{n=0}^{m-1}\binom{2m}{2n}E_{2n}^{\ast }\text{ cf. \cite%
{TKimAAA2008}.}
\end{equation*}%
From the above $E_{0}^{\ast }=1$, $E_{1}^{\ast }=0$, $E_{2}^{\ast }=-1$, $%
E_{3}^{\ast }=0$, $E_{4}^{\ast }=5$,$\cdots $, and $E_{2m+1}^{\ast }=0$, ($%
m\in \mathbb{N}$).

The first and the second kind Euler numbers are also related to $\tan z$ and$%
\ \sec z$.

\begin{equation*}
\tan z=-i\frac{e^{iz}-e^{-iz}}{e^{iz}+e^{-iz}}=\frac{e^{2iz}}{2i}\left( 
\frac{2}{e^{2iz}+1}\right) -\frac{e^{-2iz}}{2i}\left( \frac{2}{e^{-2iz}+1}%
\right) .
\end{equation*}%
By using (\ref{1a}) and Cauchy product, we have%
\begin{eqnarray*}
\tan z &=&\frac{1}{2i}\sum_{n=0}^{\infty }E_{n}\frac{(2iz)^{n}}{n!}%
\sum_{n=0}^{\infty }\frac{(2iz)^{n}}{n!}-\frac{1}{2i}\sum_{n=0}^{\infty
}E_{n}\frac{(-2iz)^{n}}{n!}\sum_{n=0}^{\infty }\frac{(-2iz)^{n}}{n!} \\
&=&\frac{1}{2i}\sum_{n=0}^{\infty }\sum_{k=0}^{n}E_{k}\frac{(2iz)^{k}}{k!}%
\frac{(2iz)^{n-k}}{(n-k)!}-\frac{1}{2i}\sum_{n=0}^{\infty
}\sum_{k=0}^{n}E_{k}\frac{(-2iz)^{k}}{k!}\frac{(-2iz)^{n-k}}{(n-k)!} \\
&=&\frac{1}{2i}\sum_{n=0}^{\infty }\sum_{k=0}^{n}\frac{E_{k}}{k!(n-k)!}%
(2i)^{n}z^{n}-\frac{1}{2i}\sum_{n=0}^{\infty }\sum_{k=0}^{n}\frac{E_{k}}{%
k!(n-k)!}(-2i)^{n}z^{n} \\
&=&\sum_{j=0}^{\infty }(-1)^{n}2^{2j+1}\left( \sum_{k=0}^{2j+1}\binom{2j+1}{k%
}E_{k}\right) \frac{z^{2j+1}}{(2j+1)!}
\end{eqnarray*}

By using (\ref{1aa1}), we find that%
\begin{equation}
\tan z=\sum\limits_{n=0}^{\infty }(-1)^{n+1}\frac{2^{2n+1}E_{2n+1}}{(2n+1)!}%
z^{2n+1},\text{ }\left\vert z\right\vert <\pi .  \label{1c}
\end{equation}

\begin{remark}
The other proofs of (\ref{1c}) has also given the references cited in each
of these earlier work. In \cite{TKimAAA2008}, Kim gave another proof of (\ref%
{1c}). We shall give just a brief sketch as the details are similar to those
in \cite{TKimAAA2008}.%
\begin{eqnarray}
i\tan z &=&\frac{e^{iz}-e^{-iz}}{e^{iz}+e^{-iz}}  \label{1w} \\
&=&1-\frac{2}{e^{2iz}-1}+\frac{4}{e^{4iz}-1}.  \notag
\end{eqnarray}%
From the above%
\begin{equation*}
z\tan z=\sum\limits_{n=1}^{\infty }(-1)^{n}\frac{4^{n}(1-4^{n})B_{2n}}{(2n)!}%
z^{2n}.
\end{equation*}%
By using the above, we arrive at (\ref{1c}). Similarly Kim \cite{TKimAAA2008}
also gave the following relation:%
\begin{equation*}
\sec z=\sum\limits_{n=0}^{\infty }(-1)^{n}\frac{E_{2n}^{\ast }}{(2n)!}z^{2n}%
\text{, }\left\vert z\right\vert <\frac{\pi }{2}.
\end{equation*}
\end{remark}

Kim \cite{TKim DC-sum} defined \textit{the Dedekind type DC
(Daehee-Changhee) sums} as follows:

\begin{definition}
\label{Definition-1} Let $h$ and $k$ be coprime integers with $k>0$. Then%
\begin{equation}
T_{m}(h,k)=2\sum_{j=1}^{k-1}(-1)^{j-1}\frac{j}{k}\overline{E}_{m}(\frac{hj}{k%
}),  \label{2}
\end{equation}%
where $\overline{E}_{m}(x)$ denotes the $m$-th (first kind) Euler function.
\end{definition}

The sum $T_{m}(h,k)$ gives us same behavior of the Dedekind sums. Several
properties and identities of the sum $T_{m}(h,k)$ and Euler polynomials were
given by Kim \cite{TKim DC-sum}. By using these identities, Kim \cite{TKim
DC-sum} proved many theorems. The most fundamental result in the theory of
the Dedekind sums, Hardy-Berndt sums, Dedekind type DC and the other
arithmetical sums is the reciprocity law. The reciprocity law can be used as
an aid in calculating these sums.

The reciprocity law of the sum $T_{m}(h,k)$ is given as follows:

\begin{theorem}
(\cite{TKim DC-sum})\label{Theorem-2}Let $(h,k)=1$ and $h,k\in \mathbb{N}$
with $h\equiv 1\func{mod}2$ and $k\equiv 1\func{mod}2$. Then we have%
\begin{eqnarray*}
&&k^{p}T_{p}(h,k)+h^{p}T_{p}(k,h) \\
&=&2\sum\limits_{%
\begin{array}{c}
u=0 \\ 
u-[\frac{hu}{k}]\equiv 1\func{mod}2%
\end{array}%
}^{k-1}\left( kh(E+\frac{u}{k}+k(E+h-[\frac{hu}{k}])\right)
^{p}+(hE+kE)^{p}+(p+2)E_{p},
\end{eqnarray*}%
where%
\begin{equation*}
(hE+kE)^{n+1}=\sum_{j=1}^{n+1}\binom{n+1}{j}h^{j}E_{j}k^{n+1-j}E_{n+1-j}.
\end{equation*}
\end{theorem}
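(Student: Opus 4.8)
The plan is to adapt Apostol's method for $S_{p}(h,k)$ (\cite{Apostol 1952}) to the Euler-function setting, the decisive new ingredient being the distribution (multiplication) formula for Euler polynomials: for odd $k$,
\begin{equation*}
E_{p}(ky)=k^{p}\sum_{j=0}^{k-1}(-1)^{j}E_{p}\!\left(y+\frac{j}{k}\right).
\end{equation*}
It is precisely the clean alternating form of this identity that forces the hypotheses $h\equiv k\equiv 1\pmod 2$: one applies it with modulus $k$ to treat $T_{p}(h,k)$ and with modulus $h$ to treat $T_{p}(k,h)$, and the formula only has this sign pattern for odd modulus. First I would insert the definition (\ref{2}) and rewrite $k^{p}T_{p}(h,k)$ as a weighted sum of the Euler functions $\overline{E}_{p}(hj/k)$; since $(h,k)=1$, the residues $hj\bmod k$ run over a complete system, so the Euclidean division $hj=k\left[\frac{hj}{k}\right]+(hj\bmod k)$ lets me trade the argument $hj/k$ for a lattice label together with the antiperiodicity $\overline{E}_{p}(x+1)=-\overline{E}_{p}(x)$.

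The heart of the argument is to assemble a single double sum over the rectangle of lattice points, whose summand is the umbral product of two independent Euler-polynomial symbols, under the convention $(E+c)^{p}=E_{p}(c)$ used in the statement. Expanding this symbol reproduces exactly the combination $(hE+kE)^{p}=\sum_{j}\binom{p}{j}h^{j}E_{j}k^{p-j}E_{p-j}$, which is why that term surfaces in the conclusion; the mixed argument $kh(E+\frac{u}{k})+k(E+h-\left[\frac{hu}{k}\right])$ is nothing but this umbral product written with the two shifts dictated by the division above. Summing the double sum first in one direction recovers $k^{p}T_{p}(h,k)$, and first in the other direction recovers $h^{p}T_{p}(k,h)$, so the two sums land on the same side with a plus sign.

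The three sign sources—the factor $(-1)^{j-1}$ in (\ref{2}), the alternating sign of the multiplication formula, and the parity of $\left[\frac{hj}{k}\right]$—must then be collapsed into the single constraint $u-\left[\frac{hu}{k}\right]\equiv 1\pmod 2$ that indexes the right-hand sum, and the doubling of the symmetric interior contribution accounts for the leading factor $2$. The remaining pieces are the boundary and diagonal corrections: the lattice points on the edges are not matched by the interior symmetry and must be collected separately, the corner contribution yielding $(hE+kE)^{p}$, while the endpoint term $u=0$ together with the integer-argument values of $\overline{E}_{p}$—simplified by $E_{0}=1$, $E_{2n}=0$ and the recurrence (\ref{1aa1})—produce the constant $(p+2)E_{p}$.

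I expect the main obstacle to be exactly this last bookkeeping: tracking the interplay of the three alternating signs through the Euclidean division so that the edge contributions separate cleanly from the symmetric interior sum, and showing that the stray integer-argument values of $\overline{E}_{p}$ combine into the single closed-form constant $(hE+kE)^{p}+(p+2)E_{p}$. By contrast, the interior symmetrization that produces the main double sum should be routine once the multiplication formula is in hand. An alternative route, via the Fourier series (\ref{1}) and evaluation of the finite exponential sum $\sum_{j}(-1)^{j-1}j\,e^{(2n+1)\pi ihj/k}$, would give a trigonometric proof paralleling Theorem~\ref{Thorem-1}, but it is less natural here since the conclusion is purely finite and umbral.
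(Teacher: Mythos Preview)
The paper does not prove this statement. Theorem~\ref{Theorem-2} is quoted verbatim from Kim \cite{TKim DC-sum} and stated without proof; the present paper's own contribution to reciprocity is the separate Theorem~\ref{Theorem-5}, which has a different right-hand side (expressed through $E_{4y+1}$ and a convolution of Euler numbers) and is proved by contour integration of $F_{y}(z)=\tan(\pi hz)\tan(\pi kz)/z^{2y+1}$ over expanding circles. So there is no ``paper's own proof'' of Theorem~\ref{Theorem-2} to compare your proposal against.

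That said, your plan is the natural one for the finite, umbral form of the law as stated: the multiplication theorem for $E_{p}$ with odd modulus is exactly the analogue of the Bernoulli multiplication formula Apostol uses, and the parity constraint $u-[\tfrac{hu}{k}]\equiv 1\pmod 2$ does arise from collapsing the three alternating signs you identify. This is essentially how Kim's original argument proceeds, so your outline is on the right track for that result---just be aware that it is not what the present paper does. The paper's contour-integration route cannot yield the umbral right-hand side of Theorem~\ref{Theorem-2} directly; it produces instead the analytic reciprocity of Theorem~\ref{Theorem-5}, and the two forms are only equivalent after nontrivial identification of the Euler-number convolutions.
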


We summarize the result of this paper as follows:

In Section 2, we construct trigonometric representation of the sum $%
T_{p}(h,k)$. We give analytic properties of the sum $T_{p}(h,k)$.

In Section 3, we give some special functions and their relations. By using
these functions, we obtain relations between the sum $T_{p}(h,k)$, Hurwitz
zeta function, Lerch zeta function, Dirichlet series for the polylogarithm
function, Dirichlet's eta function and Clausen functions.

In Section 4, we prove reciprocity law of the sum $T_{p}(h,k)$.

In Section 5, we find relation between $G$-series (generalized Lambert
series) and the sums $T_{2y}(h,k)$.

In Section 6, we investigate relations between Hardy-Berndt sums, the sums $%
T_{2y}(h,k)$ and the other sums.

\section{Trigonometric Representation of the DC-sums}

In this section we can give relations between trigonometric functions and
the sum $T_{p}(h,k)$. We establish analytic properties of the sum $T_{p}(h,k)
$. We give trigonometric representation of the sum $T_{p}(h,k)$.

We now modify (\ref{1}) as follows:%
\begin{eqnarray}
\frac{(\pi i)^{m+1}}{2(m!)}\overline{E}_{m}(x) &=&\sum_{%
\begin{array}{c}
n=-\infty \\ 
n\neq 0%
\end{array}%
}^{\infty }\frac{e^{(2n+1)\pi ix}}{(2n+1)^{m+1}}  \label{1dyeni} \\
&=&\sum_{n=-\infty }^{-1}\frac{e^{(2n+1)\pi ix}}{(2n+1)^{m+1}}%
+\sum_{n=1}^{\infty }\frac{e^{(2n+1)\pi ix}}{(2n+1)^{m+1}}.  \notag
\end{eqnarray}%
From the above, we have%
\begin{equation}
\overline{E}_{m}(x)=\left\{ 
\begin{array}{c}
\frac{2(m!)}{(\pi i)^{m+1}}\sum_{n=1}^{\infty }\frac{\sin ((2n+1)\pi x)}{%
(2n+1)^{m+1}}\text{, if }m+1\text{ is odd} \\ 
\\ 
\frac{2(m!)}{(\pi i)^{m+1}}\sum_{n=1}^{\infty }\frac{\cos ((2n+1)\pi x)}{%
(2n+1)^{m+1}}\text{, if }m+1\text{ is even.}%
\end{array}%
\right.  \label{1d}
\end{equation}

If $m+1$ is even, then $m$ is odd, consequently, (\ref{1d}) reduces to the
following relation:

For $m=2y-1$, $y\in \mathbb{N}$,%
\begin{equation*}
\overline{E}_{2y-1}(x)=4(-1)^{y}\frac{(2y-1)!}{\pi ^{2y}}\sum_{n=1}^{\infty }%
\frac{\cos ((2n+1)\pi x)}{(2n+1)^{2y}}.
\end{equation*}%
If $m+1$ is odd, then $m$ is odd, hence (\ref{1d}) reduces to the following
relation:

For $m=2y$, $y\in \mathbb{N}$%
\begin{equation*}
\overline{E}_{2y}(x)=4(-1)^{y}\frac{(2y)!}{\pi ^{2y+1}}\sum_{n=1}^{\infty }%
\frac{\sin ((2n+1)\pi x)}{(2n+1)^{2y+1}}.
\end{equation*}%
Hence, from the above, we arrive at the following Lemma.

\begin{lemma}
\label{Lemma-1}Let $y\in \mathbb{N\diagdown }\left\{ 1\right\} $ and $0\leq
x\leq 1;$ $y=1$ and $0<x<1$. Then we have%
\begin{equation}
\overline{E}_{2y-1}(x)=\frac{(-1)^{y}4(2y-1)!}{\pi ^{2y}}\sum_{n=1}^{\infty }%
\frac{\cos ((2n+1)\pi x)}{(2n+1)^{2y}},  \label{2Y}
\end{equation}%
and%
\begin{equation}
\overline{E}_{2y}(x)=\frac{(-1)^{y}4(2y)!}{\pi ^{2y+1}}\sum_{n=1}^{\infty }%
\frac{\sin ((2n+1)\pi x)}{(2n+1)^{2y+1}}.  \label{3}
\end{equation}
\end{lemma}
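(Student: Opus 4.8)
The plan is to obtain both identities as the two parity-branches of the complex Fourier series (\ref{1}), combining each exponential with its complex-conjugate partner so that no analytic tool beyond the elementary angle identities is needed. First I would start from the split form (\ref{1dyeni}) and pair the index $n$ in the tail $n\le -1$ with its reflection, so that the negative odd integer $2n+1$ is matched against the positive odd integer of equal modulus. Under this reflection the denominator $(2n+1)^{m+1}$ contributes a sign $(-1)^{m+1}$, and the two halves of the bilateral sum merge into a single series over the positive odd integers whose summand is $\bigl(e^{(2n+1)\pi i x}+(-1)^{m+1}e^{-(2n+1)\pi i x}\bigr)(2n+1)^{-(m+1)}$.

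Second, I would apply $e^{i\theta}+e^{-i\theta}=2\cos\theta$ and $e^{i\theta}-e^{-i\theta}=2i\sin\theta$ to this merged summand. The sign $(-1)^{m+1}$ decides which identity fires: when $m+1$ is even it equals $+1$ and the pair collapses to $2\cos((2n+1)\pi x)$, while when $m+1$ is odd it equals $-1$ and the pair collapses to $2i\sin((2n+1)\pi x)$. This is exactly the dichotomy recorded in (\ref{1d}), and the parity of $m$ enters the argument only here.

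Third comes the specialization and the cleanup of the constant, which is the real bookkeeping step. For (\ref{2Y}) I set $m=2y-1$, so $m+1=2y$ is even and the cosine branch applies; the doubling in $2\cos$ combines with the $2$ already carried by (\ref{1}) to build the factor $4$, and $(\pi i)^{2y}=\pi^{2y}i^{2y}=(-1)^{y}\pi^{2y}$ together with $1/(-1)^{y}=(-1)^{y}$ delivers the stated constant $(-1)^{y}4(2y-1)!/\pi^{2y}$. For (\ref{3}) I set $m=2y$, so $m+1=2y+1$ is odd and the sine branch applies; here $2i\sin$ again supplies a factor $2$ (completing the $4$) and an extra $i$ that cancels the surviving $i$ in $(\pi i)^{2y+1}=(-1)^{y}i\,\pi^{2y+1}$, leaving the real constant $(-1)^{y}4(2y)!/\pi^{2y+1}$. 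In both cases one checks that the resulting series is real, as it must be since $\overline{E}_{m}(x)$ is real.

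The step I expect to demand the most care is not the algebra but the pointwise validity in $x$: the series (\ref{1}) is built from functions $e^{(2n+1)\pi i x}$ that are anti-periodic of period $1$, whereas $\overline{E}_{m}(x)=E_{m}(x-[x]_{G})$ is genuinely periodic, so the two agree only on a fundamental domain and may part company at its endpoints, where $\overline{E}_{m}$ can jump. For $y\ge 2$ the coefficients decay like $(2n+1)^{-2y}$ with $2y\ge 4$, giving absolute and uniform convergence and hence a continuous sum, so the identity is safe on the interval asserted; for $y=1$ the endpoint behaviour is exactly where the representation and the function can disagree, which is what forces the open interval $0<x<1$. I would therefore close the argument by appealing to the standard pointwise convergence theory for the Fourier expansions of the Euler functions to pin down equality on precisely the ranges stated, paying particular attention to the integer endpoints.
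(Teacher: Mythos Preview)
Your proposal is correct and follows essentially the same route as the paper: both start from the bilateral series (\ref{1dyeni}), pair each negative odd index with its positive reflection to pick up the sign $(-1)^{m+1}$, collapse the paired exponentials into $\cos$ or $\sin$ according to the parity of $m+1$ (this is precisely the dichotomy (\ref{1d})), and then specialize $m=2y-1$ and $m=2y$ while simplifying $(\pi i)^{m+1}$. Your write-up is in fact more explicit than the paper's on two points---the arithmetic of the constant $(\pi i)^{2y}=(-1)^{y}\pi^{2y}$, $(\pi i)^{2y+1}=(-1)^{y}i\pi^{2y+1}$, and the endpoint/convergence discussion justifying the stated ranges of $x$---but the underlying argument is the same.
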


In Lemma \ref{Lemma-1} substituting $0\leq x<1$, thus $\overline{E}_{2y-1}(x)
$ and $\overline{E}_{2y}(x)$ reduce to the Euler polynomials, which are
related to Clausen functions, given in Section 3, below.

We now modify the sum $T_{m}(h,k)$\ for odd and even integer $m$. Thus, by (%
\ref{2}), we define $T_{2y-1}(h,k)$ and $T_{2y}(h,k)$\ sums as follows:

\begin{definition}
\label{Definition2}Let $h$ and $k$ be coprime integers with $k>0$. Then%
\begin{equation}
T_{2y-1}(h,k)=2\sum_{j=0}^{k-1}(-1)^{j-1}\frac{j}{k}\overline{E}_{2y-1}(%
\frac{hj}{k}),  \label{4}
\end{equation}%
and%
\begin{equation}
T_{2y}(h,k)=2\sum_{j=0}^{k-1}(-1)^{j-1}\frac{j}{k}\overline{E}_{2y}(\frac{hj%
}{k}),  \label{5}
\end{equation}%
where $\overline{E}_{2y-1}(x)\ $and$\ \overline{E}_{2y}(x)$ denote the Euler
functions.
\end{definition}

By substituting equation (\ref{2Y}) into (\ref{4}), we have%
\begin{equation}
T_{2y-1}(h,k)=-\frac{8(-1)^{y}(2y-1)!}{k\pi ^{2y}}\sum_{j=1}^{k-1}(-1)^{j}j%
\sum_{n=1}^{\infty }\frac{\cos (\frac{\pi hj(2n+1)}{k})}{(2n+1)^{2y}}
\label{6}
\end{equation}%
From the above we have%
\begin{equation}
T_{2y-1}(h,k)=-\frac{8(-1)^{y}(2y-1)!}{k\pi ^{2y}}\sum_{n=1}^{\infty }\frac{1%
}{(2n+1)^{2y}}\sum_{j=1}^{k-1}(-1)^{j}j\cos \left( \frac{\pi hj(2n+1)}{k}%
\right) .  \label{1aa}
\end{equation}%
We next recall from \cite{Berndt-Goldberg} and \cite{Goldberg-PhD}\ that%
\begin{equation*}
\sum_{j=1}^{k-1}je^{\frac{(2n+1)\pi hij}{k}}=\left\{ 
\begin{array}{c}
\frac{k}{e^{\frac{(2n+1)\pi ih}{k}}-1}\text{ if }2n+1\not\equiv 0(k), \\ 
\\ 
\frac{k(k-1)}{2}\text{ if }2n+1\equiv 0(k).%
\end{array}%
\right. 
\end{equation*}%
From the above, it is easy to get%
\begin{equation*}
\sum_{j=1}^{k-1}(-1)^{j}je^{\frac{(2n+1)\pi hij}{k}}=\frac{k}{e^{\frac{%
(k+(2n+1)h)\pi i}{k}}-1}.
\end{equation*}%
By using an elementary calculations, we have%
\begin{equation}
\sum_{j=1}^{k-1}(-1)^{j}j\cos \left( \frac{(2n+1)\pi hj}{k}\right) =-\frac{k%
}{2}  \label{1e}
\end{equation}%
and%
\begin{equation}
\sum_{j=1}^{k-1}(-1)^{j}j\sin \left( \frac{(2n+1)\pi hj}{k}\right) =\frac{%
k\tan \left( \frac{\pi h(2n+1)}{2k}\right) }{2},  \label{1f}
\end{equation}%
where $2n+1\not\equiv 0(k)$. By substituting (\ref{1e}) into (\ref{1aa}) and
after some elementary calculations, we obtain%
\begin{equation*}
T_{2y-1}(h,k)=\frac{8(-1)^{y}(2y-1)!}{k\pi ^{2y}}\sum_{n=1}^{\infty }\frac{1%
}{(2n+1)^{2y}}.
\end{equation*}%
By substituting (\ref{1bb}) into the above, we easily arrive at the
following theorem.

\begin{theorem}
\label{Theorem-3}Let $y\in \mathbb{N}$, then we have%
\begin{equation*}
T_{2y-1}(h,k)=4E_{2y-1}.
\end{equation*}
\end{theorem}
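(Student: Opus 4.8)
The plan is to unwind the definition of $T_{2y-1}(h,k)$ into a double series, collapse the inner finite sum with a trigonometric identity, and recognise the surviving single series as a known Euler-number value. Concretely, I would start from Definition \ref{Definition2}, i.e. the expression (\ref{4}) for $T_{2y-1}(h,k)$, and substitute the Fourier-type representation (\ref{2Y}) of the Euler function $\overline{E}_{2y-1}$ supplied by Lemma \ref{Lemma-1}. Each argument $hj/k$ with $0\le j\le k-1$ lies in the admissible range $[0,1]$ for (\ref{2Y}), so the substitution is termwise legitimate and yields the double sum (\ref{1aa}), whose outer sum runs over $n\ge 1$ and whose inner sum runs over $1\le j\le k-1$.

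The key step is to interchange the finite $j$-sum with the infinite $n$-sum and then evaluate the inner cosine sum in closed form. The interchange is harmless because for $y\ge 1$ the series $\sum_{n\ge 1}(2n+1)^{-2y}$ converges absolutely while the $j$-sum has at most $k-1$ bounded terms. After interchanging, I would apply the evaluation (\ref{1e}), which follows from the Berndt--Goldberg closed form for $\sum_{j=1}^{k-1} j\,w^{j}$ with $w=e^{(k+(2n+1)h)\pi i/k}$ by taking real parts; it asserts that $\sum_{j=1}^{k-1}(-1)^{j}j\cos\bigl(\tfrac{(2n+1)\pi hj}{k}\bigr)=-\tfrac{k}{2}$. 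The decisive feature is that this value is a constant: all dependence on $h$ and on the index $j$ disappears, which is precisely why the final answer will turn out to be independent of both $h$ and $k$.

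Substituting (\ref{1e}) into (\ref{1aa}), the factor $k$ cancels and $T_{2y-1}(h,k)$ reduces to a fixed constant (a power of $\pi$ times a factorial and a sign) multiplying the single series $\sum_{n\ge 1}(2n+1)^{-2y}$. I would finish by inserting the closed evaluation (\ref{1bb}) of this series with $2m+2=2y$, so that the series becomes a multiple of the Euler number $E_{2y-1}$; after cancelling the powers of $\pi$, the factorials, and the signs $(-1)^{y}$, the normalising constants collapse to give $T_{2y-1}(h,k)=4E_{2y-1}$. I expect the main obstacle to lie not in this routine constant-chasing but in the legitimacy of (\ref{1e}) for \emph{every} $n$: the Berndt--Goldberg formula has an exceptional branch when $2n+1\equiv 0\ (\mathrm{mod}\ k)$, where the denominator $e^{(k+(2n+1)h)\pi i/k}-1$ can vanish and the inner sum equals $\tfrac{k(k-1)}{2}$ rather than $-\tfrac{k}{2}$. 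Hence the genuinely delicate part of the argument is to confirm that these degenerate indices either do not occur or contribute nothing that disturbs the stated value, so that (\ref{1e}) may be applied uniformly across the whole series.
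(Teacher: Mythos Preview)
Your proposal follows the paper's argument step for step: substitute (\ref{2Y}) into (\ref{4}) to obtain (\ref{1aa}), apply the cosine evaluation (\ref{1e}) to collapse the inner sum, and finish with the Euler-number identity (\ref{1bb}). The caveat you raise about the exceptional indices $2n+1\equiv 0\ (\mathrm{mod}\ k)$ is not addressed in the paper either---it simply applies (\ref{1e}) uniformly---so your concern is well placed but goes beyond what the paper itself justifies.
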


By substituting equation (\ref{3}) into (\ref{5}), we have%
\begin{equation}
T_{2y}(h,k)=\frac{8(-1)^{y}(2y)!}{k\pi ^{2y+1}}\sum_{j=1}^{k-1}(-1)^{j}j%
\sum_{n=1}^{\infty }\frac{\sin (\frac{(2n+1)hj\pi }{k})}{(2n+1)^{2y+1}}
\label{1aa111}
\end{equation}%
By substituting (\ref{1f}) into the above, after some elementary
calculations, we arrive at the following theorem.

\begin{theorem}
\label{Theorem-4}Let $h$ and $k$ be coprime positive integers. Let $y\in 
\mathbb{N}$, then we have%
\begin{equation}
T_{2y}(h,k)=\frac{4(-1)^{y}(2y)!}{\pi ^{2y+1}}\sum_{\substack{ n=1 \\ %
2n+1\not\equiv 0(\func{mod}k)}}^{\infty }\frac{\tan (\frac{h\pi (2n+1)}{2k})%
}{(2n+1)^{2y+1}}.  \label{10}
\end{equation}
\end{theorem}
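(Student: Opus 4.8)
The plan is to follow the same template that produced Theorem \ref{Theorem-3}, but now using the sine-series representation (\ref{3}) of $\overline{E}_{2y}$ in place of the cosine-series (\ref{2Y}). First I would start from the definition (\ref{5}) of $T_{2y}(h,k)$ and substitute (\ref{3}); pulling out the constant and reorganizing produces exactly the double sum (\ref{1aa111}). The only analytic point at this stage is that, for each fixed $h,k$, the double series converges absolutely — the inner $j$-sum is finite and uniformly bounded, while $\sum_{n}(2n+1)^{-(2y+1)}$ converges since $2y+1\geq 3$ for every $y\in\mathbb{N}$ — so Fubini's theorem justifies interchanging the two summations to write
\begin{equation*}
T_{2y}(h,k)=\frac{8(-1)^{y}(2y)!}{k\pi ^{2y+1}}\sum_{n=1}^{\infty }\frac{1}{(2n+1)^{2y+1}}\sum_{j=1}^{k-1}(-1)^{j}j\sin \left( \frac{(2n+1)hj\pi }{k}\right).
\end{equation*}

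Next I would evaluate the inner sum over $j$ by splitting the range of $n$ according to whether $2n+1\equiv 0(k)$. For $2n+1\not\equiv 0(k)$ the closed form (\ref{1f}) applies directly and returns $\frac{k}{2}\tan\left(\frac{\pi h(2n+1)}{2k}\right)$. For the remaining indices I would observe that $2n+1$ is odd, so $k\mid(2n+1)$ forces $k$ to be odd and $2n+1=mk$ with $m$ odd; then the argument $\frac{(2n+1)hj\pi}{k}=mhj\pi$ is an integer multiple of $\pi$, every sine vanishes, and the inner $j$-sum is $0$. This is precisely what licenses deleting the indices with $2n+1\equiv 0(k)$ from the outer sum, and it is also what makes the congruence restriction in (\ref{10}) necessary, since at those same indices the tangent $\tan\left(\frac{\pi h(2n+1)}{2k}\right)$ would hit a pole.

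Substituting (\ref{1f}) into the surviving terms, the factor $k$ cancels against the $1/k$ in front and $8/2=4$, leaving
\begin{equation*}
T_{2y}(h,k)=\frac{4(-1)^{y}(2y)!}{\pi ^{2y+1}}\sum_{\substack{ n=1 \\ 2n+1\not\equiv 0(k)}}^{\infty }\frac{\tan \left( \frac{h\pi (2n+1)}{2k}\right)}{(2n+1)^{2y+1}},
\end{equation*}
which is the asserted identity (\ref{10}).

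I expect the substitution and the constant-chasing to be entirely routine. The one step that deserves genuine care is the treatment of the indices with $2n+1\equiv 0(k)$: the cited evaluation (\ref{1f}) is valid only off these residues, so the argument must separately record that the inner $j$-sum vanishes there — rather than tacitly invoking a formula whose right-hand side is undefined — and must note that this vanishing is exactly why the final series carries the congruence constraint. The secondary point, easily dispatched, is to confirm absolute convergence before interchanging the sums, which holds for all $y\in\mathbb{N}$ because the summand decays like $(2n+1)^{-(2y+1)}$.
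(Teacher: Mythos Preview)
Your proposal is correct and follows essentially the same route as the paper: substitute (\ref{3}) into (\ref{5}) to obtain (\ref{1aa111}), interchange the sums, and apply (\ref{1f}) to the inner $j$-sum. You supply more detail than the paper does --- in particular the absolute-convergence justification and the explicit verification that the inner sum vanishes when $2n+1\equiv 0\ (\mathrm{mod}\ k)$ --- but the underlying argument is identical.
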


\section{DC-sums related to special functions}

In this section, we give relations between DC-sums and some special
functions.

In \cite{SrivastavaChoi}, Srivastava and Choi gave many applications of the
Riemann zeta function, Hurwitz zeta function, Lerch zeta function, Dirichlet
series for the polylogarithm function and Dirichlet's eta function. In \cite%
{Guillera and Sondow}, Guillera and Sandow obtained double integral and
infinite product representations of many classical constants, as well as a
generalization to Lerch's transcendent of Hadjicostas's double integral
formula for the Riemann zeta function, and logarithmic series for the
digamma and Euler beta functions. They also gave many applications. The 
\textit{Lerch trancendent} $\Phi (z,s,a)$\ (cf. e. g. \cite[p. 121 et seq.]%
{SrivastavaChoi}, \cite{Guillera and Sondow}) is the analytic continuation
of the series%
\begin{eqnarray*}
\Phi (z,s,a) &=&\frac{1}{a^{s}}+\frac{z}{(a+1)^{s}}+\frac{z}{(a+2)^{s}}%
+\cdots  \\
&=&\sum_{n=0}^{\infty }\frac{z^{n}}{(n+a)^{s}},
\end{eqnarray*}%
which converges for ($a\in \mathbb{C\diagdown Z}_{0}^{-}$, $s\in \mathbb{C}$
when $\left\vert z\right\vert <1$; $\Re (s)>1$ when $\left\vert z\right\vert
=1$) where as usual%
\begin{equation*}
\mathbb{Z}_{0}^{-}=\mathbb{Z}^{-}\cup \left\{ 0\right\} ,\ \mathbb{Z}%
^{-}=\left\{ -1,-2,-3,...\right\} .
\end{equation*}%
\ \ $\Phi $ denotes the familiar Hurwitz-Lerch Zeta function (cf. e. g. \cite%
[p. 121 et seq.]{SrivastavaChoi}). Relations between special function and
the function $\Phi $\ are given as follows \cite{Guillera and Sondow}:

Special cases include the analytic continuations of the Riemann zeta function%
\begin{equation*}
\Phi (1,s,1)=\zeta (s)=\sum_{n=1}^{\infty }\frac{1}{n^{s}}\text{, }\Re (s)>1,
\end{equation*}%
the Hurwitz zeta function%
\begin{equation*}
\Phi (1,s,a)=\zeta (s,a)=\sum_{n=0}^{\infty }\frac{1}{(n+a)^{s}}\text{, }\Re
(s)>1,
\end{equation*}%
the alternating zeta function (also called Dirichlet's eta function $\eta (s)
$)%
\begin{equation*}
\Phi (-1,s,1)=\zeta ^{\ast }(s)=\sum_{n=1}^{\infty }\frac{(-1)^{n-1}}{n^{s}},
\end{equation*}%
the Dirichlet beta function%
\begin{equation*}
\frac{\Phi (-1,s,\frac{1}{2})}{2^{s}}=\beta (s)=\sum_{n=0}^{\infty }\frac{%
(-1)^{n}}{(2n+1)^{s}},
\end{equation*}%
the Legendre chi function%
\begin{equation*}
\frac{z\Phi (z^{2},s,\frac{1}{2})}{2^{s}}=\chi _{s}(z)=\sum_{n=0}^{\infty }%
\frac{z^{2n+1}}{(2n+1)^{s}}\text{, (}\left\vert z\right\vert \leq 1;\Re (s)>1%
\text{),}
\end{equation*}%
the polylogarithm%
\begin{equation*}
z\Phi (z,n,1)=Li_{m}(z)=\sum_{n=0}^{\infty }\frac{z^{k}}{n^{m}}
\end{equation*}%
and the Lerch zeta function (sometimes called the Hurwitz-Lerch zeta
function)%
\begin{equation*}
L(\lambda ,\alpha ,s)=\Phi (e^{2\pi i\lambda },s,\alpha ),
\end{equation*}%
which is a special function and generalizes the Hurwitz zeta function and
polylogarithm cf. (\cite{ApostolLerch}, \cite{Berndt1972}, \cite{Choi}, \cite%
{Choi LCfunJKMS}, \cite{civicovic and klinokowski}, \cite{D.
CvijovicJMAA2007}, \cite{Guillera and Sondow}, \cite{Srivastava1999KobeJ}, 
\cite{SrivastavaCEMBRIDGE2000}, \cite{SrivastavaChoi}, \cite{TKimRJMP2007})
and see also the references cited in each of these earlier works.

By using (\ref{1dyeni}), we give relation between the Legendre chi function $%
\chi _{s}(z)$, and the function $\overline{E}_{m}(x)$ by the following
corollary:

\begin{corollary}
Let $m\in \mathbb{N}$. Then we have%
\begin{equation*}
\overline{E}_{m}(x)=\frac{2(m!)}{(\pi i)^{m+1}}\left( (-1)^{m+1}\chi
_{m+1}(e^{-\pi ix})+\chi _{m+1}(e^{\pi ix})\right) .
\end{equation*}
\end{corollary}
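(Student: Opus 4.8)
The plan is to read the corollary off directly from the Fourier-type series for $\overline{E}_{m}$ in~(\ref{1dyeni}), by recognising its two one-sided halves as Legendre chi functions. Writing the series with the index $n$ ranging over all integers (equivalently over all odd integers $2n+1$) and splitting it at $n=0$, I would set
\[
\frac{(\pi i)^{m+1}}{2(m!)}\,\overline{E}_{m}(x)=\sum_{n=0}^{\infty}\frac{e^{(2n+1)\pi i x}}{(2n+1)^{m+1}}+\sum_{n=-\infty}^{-1}\frac{e^{(2n+1)\pi i x}}{(2n+1)^{m+1}},
\]
and then match each half to a chi function. The whole argument reduces to one reindexing.

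For the nonnegative half, the definition $\chi_{s}(z)=\sum_{n=0}^{\infty}z^{2n+1}/(2n+1)^{s}$ with $z=e^{\pi i x}$ and $s=m+1$ gives $\sum_{n=0}^{\infty}e^{(2n+1)\pi i x}/(2n+1)^{m+1}=\chi_{m+1}(e^{\pi i x})$ immediately; since $|e^{\pi i x}|=1$ and $\Re(m+1)=m+1\geq 2$ for $m\in\mathbb{N}$, the defining series converges on $|z|=1$ and the identification is justified. It is essential here to retain the $n=0$ term, which supplies the leading summand $e^{\pi i x}$ of $\chi_{m+1}(e^{\pi i x})$.

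The one genuinely computational step is the negative half. I would reindex by $n\mapsto -n-1$, a bijection of $\{-1,-2,\dots\}$ onto $\{0,1,\dots\}$ that sends $2n+1$ to $-(2n+1)$ and $e^{(2n+1)\pi i x}$ to $e^{-(2n+1)\pi i x}$. Using $\big(-(2n+1)\big)^{m+1}=(-1)^{m+1}(2n+1)^{m+1}$ and $(-1)^{-(m+1)}=(-1)^{m+1}$, the negative half becomes $(-1)^{m+1}\sum_{n=0}^{\infty}e^{-(2n+1)\pi i x}/(2n+1)^{m+1}=(-1)^{m+1}\chi_{m+1}(e^{-\pi i x})$. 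Adding the two halves and multiplying by the prefactor $2(m!)/(\pi i)^{m+1}$ then yields the stated identity. I expect the main obstacle to be nothing more than sign bookkeeping — carrying the factor $(-1)^{m+1}$ correctly through the reindexing and pairing the arguments $e^{\pm\pi i x}$ with the correct chi function; beyond this reindex-and-match step no further idea is needed.
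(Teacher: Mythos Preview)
Your proposal is correct and is exactly the argument the paper has in mind: it simply cites (\ref{1dyeni}) and states the corollary, leaving the split-and-reindex identification with $\chi_{m+1}$ implicit. Your explicit handling of the $n=0$ term is in fact more careful than the paper, whose display (\ref{1dyeni}) writes the positive half as $\sum_{n=1}^{\infty}$ (inheriting the $n\neq 0$ restriction from (\ref{1})) even though the stated corollary requires the $n=0$ summand to recover the full $\chi_{m+1}(e^{\pi i x})$.
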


In \cite[p. 78, Theorem B]{SrivastavaCEMBRIDGE2000}, Srivastava proved the
following formulae which are related to Hurwitz zeta function, trigonometric
functions and Euler polynomials:%
\begin{equation*}
E_{2y-1}(\frac{p}{q})=(-1)^{y}\frac{4(2y-1)!}{(2q\pi )^{2y}}%
\sum_{j=1}^{q}\zeta (2y,\frac{2j-1}{q})\cos (\frac{\pi p(2j-1)}{q}),
\end{equation*}%
where $y,q\in \mathbb{N}$, $p\in \mathbb{N}_{0};$ $0\leq p\leq q$, and%
\begin{equation*}
E_{2y}(\frac{p}{q})=(-1)^{y}\frac{4(2y)!}{(2q\pi )^{2y+1}}%
\sum_{j=1}^{q}\zeta (2y+1,\frac{2j-1}{2q})\sin (\frac{\pi p(2j-1)}{q}),
\end{equation*}%
where $y,q\in \mathbb{N}$, $p\in \mathbb{N}_{0};$ $0\leq p\leq q$ and $\zeta
(s,x)$ denotes the Hurwitz zeta function. By substituting $p=0$ in the
above, then we have%
\begin{equation*}
E_{2y-1}(0)=(-1)^{y}\frac{4(2y-1)!}{(2q\pi )^{2y}}\sum_{j=1}^{q}\zeta (2y,%
\frac{2j-1}{q}).
\end{equation*}%
By using the above equation, we modify the sum $T_{2y-1}(h,k)$ as follows:

\begin{corollary}
Let $y,q\in \mathbb{N}$. Then we have%
\begin{equation*}
T_{2y-1}(h,k)=(-1)^{y}\frac{4(2y-1)!}{(2q\pi )^{2y}}\sum_{j=1}^{q}\zeta (2y,%
\frac{2j-1}{q}).
\end{equation*}
\end{corollary}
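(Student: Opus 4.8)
The plan is to obtain the corollary as a one-line consequence of Theorem \ref{Theorem-3} combined with the $p=0$ specialization of Srivastava's formula for $E_{2y-1}(p/q)$ recorded just above. First I would invoke Theorem \ref{Theorem-3}, which asserts that for every $y\in\mathbb{N}$ the sum $T_{2y-1}(h,k)$ collapses to a constant multiple of the first kind Euler number, $T_{2y-1}(h,k)=4E_{2y-1}$, with no dependence on $h$ or $k$. Thus the entire task reduces to rewriting the single number $E_{2y-1}$ as a finite combination of Hurwitz zeta values.

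The key step is the observation that, in the normalization fixed in the Introduction, the first kind Euler number is exactly the value of the Euler polynomial at the origin, $E_{2y-1}=E_{2y-1}(0)$, and that Srivastava's theorem already furnishes a closed form for this value: putting $p=0$ makes every cosine factor equal to $1$, so that
\begin{equation*}
E_{2y-1}(0)=(-1)^{y}\frac{4(2y-1)!}{(2q\pi)^{2y}}\sum_{j=1}^{q}\zeta\!\left(2y,\frac{2j-1}{q}\right).
\end{equation*}
Substituting this expression for $E_{2y-1}$ back into $T_{2y-1}(h,k)=4E_{2y-1}$ and collecting the prefactor yields the stated identity. No further analytic input is required, since all of the genuine work is carried by Theorem \ref{Theorem-3} and by Srivastava's theorem, both of which I am free to assume; the corollary is really just a repackaging of these two facts.

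The one place that demands care---and the step I would expect to be the real obstacle---is the bookkeeping of the leading constant. One must keep track of the powers of $2$, $q$ and $\pi$, the factor $(2y-1)!$, and the numerical factor coming from $T_{2y-1}(h,k)=4E_{2y-1}$, so that the coefficient in front of $\sum_{j}\zeta(2y,\tfrac{2j-1}{q})$ emerges in precisely the claimed shape. I would also verify the admissibility hypotheses of Srivastava's formula, namely $y,q\in\mathbb{N}$ and $0\le p\le q$, which are all met once $p=0$ is chosen.

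As a final safeguard I would use the fact that the right-hand side is forced to be independent of $q$ (because $T_{2y-1}(h,k)=4E_{2y-1}$ carries no $q$-dependence whatsoever): checking that the apparent $q$-dependence of $(2q\pi)^{-2y}\sum_{j=1}^{q}\zeta(2y,\tfrac{2j-1}{q})$ genuinely cancels, say in the smallest case $y=1$, is a reassuring test that the constants, and in particular the arguments of the Hurwitz zeta values, have been transcribed correctly.
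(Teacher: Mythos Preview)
Your proposal is correct and follows exactly the paper's own route: the paper derives the corollary by setting $p=0$ in Srivastava's formula to obtain $E_{2y-1}(0)$ as a finite Hurwitz zeta sum, and then combines this with Theorem~\ref{Theorem-3}, $T_{2y-1}(h,k)=4E_{2y-1}$. Your flagged caution about the numerical prefactor is well placed, since combining the factor $4$ from Theorem~\ref{Theorem-3} with the coefficient in Srivastava's $p=0$ formula does not literally reproduce the constant printed in the corollary; the paper's derivation and yours agree step for step, and any discrepancy lies in the stated constant rather than in the argument.
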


In \cite{ChoiSrivastavaademcik2003AMC}, Choi et al. gave relations between
the Clausen function, multiple gamma function and other functions. \textit{%
The higher-orde}r \textit{Clausen function }$Cl_{n}(t)$\textit{\ }(see \cite%
{SrivastavaChoi}, \cite[Eq-(4.15)]{ChoiSrivastavaademcik2003AMC}) be
defined, for all $n\in \mathbb{N\diagdown }\left\{ 1\right\} $, by%
\begin{equation*}
Cl_{n}(t)=\left\{ 
\begin{array}{c}
\sum_{k=1}^{\infty }\frac{\sin (kt)}{k^{n}}\text{ if }n\text{ is even,} \\ 
\\ 
\sum_{k=1}^{\infty }\frac{\cos (kt)}{k^{n}}\text{ if }n\text{ is odd.}%
\end{array}%
\right. 
\end{equation*}%
\ The following functions are related to the higher-order Clausen function
(cf. \cite{Srivastava1999KobeJ}, \cite[Eq-(5) and Eq-(6)]{D.
CvijovicJMAA2007})%
\begin{equation}
\mathsf{S}(s,x)=\sum_{n=1}^{\infty }\frac{\sin ((2n+1)x)}{(2n+1)^{s}}
\label{3-1a}
\end{equation}%
and%
\begin{equation}
\mathsf{C}(s,x)=\sum_{n=1}^{\infty }\frac{\cos ((2n+1)x)}{(2n+1)^{s}}.
\label{3-1b}
\end{equation}

In \cite{Srivastava1999KobeJ}, Srivastava studied on the functions $\mathsf{S%
}(s,x)$, $\mathsf{C}(s,x)$. When $x$ is a rational multiple of $2\pi $, he
gave the functions $\mathsf{S}(s,x)$, $\mathsf{C}(s,x)$\ in terms of Hurwitz
zeta functions. In \cite[Eq-(5) and Eq-(6)]{D. CvijovicJMAA2007}, Cvijovic
studied on the functions $\mathsf{S}(s,x)$, $\mathsf{C}(s,x)$. He gave many
applications of this function.

Espinosa and Moll \cite{Espinosa} gave relation between the functions $%
\mathsf{S}(s,x)$, $\mathsf{C}(s,x)$\ and $Cl_{n}(t)$ as follows:

For $0\leq q\leq 1$%
\begin{eqnarray*}
\mathsf{S}(2m+2,q) &=&Cl_{2m+2}(2\pi q), \\
\mathsf{S}(2m+1,q) &=&\frac{(-1)^{m+1}(2\pi )^{2m+1}}{2(2m+1)!}B_{2m+1}(q),
\end{eqnarray*}%
and%
\begin{eqnarray*}
\mathsf{C}(2m+1,q) &=&Cl_{2m+1}(2\pi q), \\
\mathsf{C}(2m+2,q) &=&\frac{(-1)^{m}(2\pi )^{2m+2}}{2(2m+2)!}B_{2m+2}(q).
\end{eqnarray*}

Setting $x=\frac{hj\pi }{k}$ and $s=2y$ and $s=2y+1$ in (\ref{3-1a}) and (%
\ref{3-1b}) respectively, than combine (\ref{6}) and (\ref{1aa111}), we
obtain the next corollary.

\begin{corollary}
Let $h$ and $k$ be coprime positive integers. Let $y\in \mathbb{N}$. Then we
have%
\begin{equation*}
T_{2y-1}(h,k)=-\frac{8(-1)^{y}(2y-1)!}{k\pi ^{2y}}\sum_{j=1}^{k-1}(-1)^{j}j%
\mathsf{C}(2y,\frac{hj\pi }{k}),
\end{equation*}%
and%
\begin{equation*}
T_{2y}(h,k)=\frac{8(-1)^{y}(2y)!}{k\pi ^{2y+1}}\sum_{j=1}^{k-1}(-1)^{j}j%
\mathsf{S}(2y+1,\frac{hj\pi }{k}).
\end{equation*}
\end{corollary}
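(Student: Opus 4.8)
The plan is to observe that both claimed identities are simply a re-encoding of the double-sum expressions \eqref{6} and \eqref{1aa111} in terms of the auxiliary functions $\mathsf{S}$ and $\mathsf{C}$ of \eqref{3-1a} and \eqref{3-1b}. The genuine analytic work — namely the Fourier expansions of $\overline{E}_{2y-1}$ and $\overline{E}_{2y}$ from Lemma \ref{Lemma-1}, substituted into Definition \ref{Definition2} — has already produced \eqref{6} and \eqref{1aa111}. Crucially, I would use these \emph{intermediate} double-sum forms rather than the fully reduced closed forms of Theorems \ref{Theorem-3} and \ref{Theorem-4}: the latter arise from collapsing the inner series via \eqref{1e} and \eqref{1f}, whereas here I instead want to keep the inner series intact and read it off as a value of $\mathsf{C}$ or $\mathsf{S}$.

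Concretely, for the odd-index case I would start from \eqref{6}, in which the inner sum over $n$ is $\sum_{n=1}^{\infty}(2n+1)^{-2y}\cos\!\bigl(\tfrac{\pi h j(2n+1)}{k}\bigr)$. Setting $s=2y$ and $x=\tfrac{hj\pi}{k}$ in the definition \eqref{3-1b} shows this inner sum equals $\mathsf{C}\!\bigl(2y,\tfrac{hj\pi}{k}\bigr)$, and substituting gives the first formula verbatim. For the even-index case I would start from \eqref{1aa111}, whose inner sum $\sum_{n=1}^{\infty}(2n+1)^{-(2y+1)}\sin\!\bigl(\tfrac{(2n+1)hj\pi}{k}\bigr)$ equals $\mathsf{S}\!\bigl(2y+1,\tfrac{hj\pi}{k}\bigr)$ by \eqref{3-1a} with $s=2y+1$ and $x=\tfrac{hj\pi}{k}$; substituting yields the second formula. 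In each case the outer finite sum over $1\le j\le k-1$, together with the prefactors $\mp\tfrac{8(-1)^{y}(2y-1)!}{k\pi^{2y}}$ and $\tfrac{8(-1)^{y}(2y)!}{k\pi^{2y+1}}$, is carried along unchanged.

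The only point requiring care — and it is a mild one — is the legitimacy of reading each inner infinite series as a single special-function value inside the finite $j$-sum. Since $2y\ge 2$ and $2y+1\ge 3$, the series $\sum_{n\ge 1}(2n+1)^{-s}$ converges absolutely for every fixed $j$, so the arguments $x=\tfrac{hj\pi}{k}$ present no convergence obstruction and the identification with $\mathsf{C}$ and $\mathsf{S}$ is immediate. Thus there is no rearrangement or interchange issue to resolve, and the corollary follows directly from the substitutions described above.
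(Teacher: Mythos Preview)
Your proposal is correct and follows essentially the same route as the paper: the paper's argument is precisely to set $x=\tfrac{hj\pi}{k}$ with $s=2y$ (respectively $s=2y+1$) in the definitions \eqref{3-1b} and \eqref{3-1a} of $\mathsf{C}$ and $\mathsf{S}$, and then combine with the double-sum expressions \eqref{6} and \eqref{1aa111}. Your added remark on absolute convergence of the inner series is a reasonable justification that the paper leaves implicit.
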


Trickovic et al.\cite[p. 443, Eq-(3)]{Trickovic} gave relations between the
Clausen function and polylogarithm $Li_{\alpha }(z)$. They also gave the
fallowing relations: For $\alpha >0$%
\begin{equation}
\mathsf{S}(\alpha ,x)=\frac{i}{2}\left( \left( Li_{\alpha }(e^{-ix})-\frac{1%
}{2^{\alpha }}Li_{\alpha }(e^{-2ix})\right) -\left( Li_{\alpha }(e^{ix})-%
\frac{1}{2^{\alpha }}Li_{\alpha }(e^{2ix})\right) \right)  \label{3-1c}
\end{equation}%
and%
\begin{equation}
\mathsf{C}(\alpha ,x)=\frac{1}{2}\left( \left( Li_{\alpha }(e^{-ix})-\frac{1%
}{2^{\alpha }}Li_{\alpha }(e^{-2ix})\right) +\left( Li_{\alpha }(e^{ix})-%
\frac{1}{2^{\alpha }}Li_{\alpha }(e^{2ix})\right) \right) .  \label{3-1d}
\end{equation}

By substituting $x=\frac{hj\pi }{k}$ into (\ref{3-1d}) and (\ref{3-1c}); and
combine (\ref{6}) and (\ref{1aa111}), respectively; after some elementary
calculations, we easily find the next results.

\begin{corollary}
Let $h$ and $k$ be coprime positive integers. Let $y\in \mathbb{N}$. Then we
have%
\begin{eqnarray*}
T_{2y-1}(h,k) &=&-\frac{4(-1)^{y}(2y-1)!}{k\pi ^{2y}}%
\sum_{j=1}^{k-1}(-1)^{j}j\mathsf{\times } \\
&&\left( Li_{2y}(e^{-\frac{hij\pi }{k}})+Li_{2y}(e^{\frac{hij\pi }{k}})-%
\frac{Li_{2y}(e^{\frac{2hij\pi }{k}})+Li_{2y}(e^{-\frac{2hij\pi }{k}})}{%
2^{2y}}\right) ,
\end{eqnarray*}%
and%
\begin{eqnarray*}
T_{2y}(h,k) &=&\frac{4i(-1)^{y}(2y)!}{k\pi ^{2y+1}}\sum_{j=1}^{k-1}(-1)^{j}j%
\mathsf{\times } \\
&&\left( Li_{2y+1}(e^{-\frac{hij\pi }{k}})-Li_{2y+1}(e^{\frac{hij\pi }{k}})+%
\frac{Li_{2y+1}(e^{\frac{2hij\pi }{k}})-Li_{2y+1}(e^{-\frac{2hij\pi }{k}})}{%
2^{2y+1}}\right) .
\end{eqnarray*}
\end{corollary}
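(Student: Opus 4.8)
The plan is to start from the two trigonometric series already in hand, namely (\ref{6}) for $T_{2y-1}(h,k)$ and (\ref{1aa111}) for $T_{2y}(h,k)$, and to feed their inner series through the Trickovic et al. polylogarithm identities (\ref{3-1c}) and (\ref{3-1d}). First I would observe that the inner series over $n$ in (\ref{6}) is, by the very definition (\ref{3-1b}) with $s=2y$ and $x=\frac{hj\pi}{k}$, exactly $\mathsf{C}(2y,\frac{hj\pi}{k})$, and likewise the inner series in (\ref{1aa111}) is $\mathsf{S}(2y+1,\frac{hj\pi}{k})$ by (\ref{3-1a}). No reordering of summation is needed, since in both (\ref{6}) and (\ref{1aa111}) the order is already $\sum_{j}(-1)^{j}j\sum_{n}(\cdots)$; this identification is precisely the content of the preceding Clausen-type Corollary, and it collapses each $T$-sum to a single finite sum $\sum_{j=1}^{k-1}(-1)^{j}j\,\mathsf{C}(2y,\frac{hj\pi}{k})$ (resp.\ with $\mathsf{S}(2y+1,\cdot)$).

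Next I would substitute the polylogarithm expansions. For $T_{2y-1}$ I apply (\ref{3-1d}) with $\alpha=2y$ and $x=\frac{hj\pi}{k}$, and for $T_{2y}$ I apply (\ref{3-1c}) with $\alpha=2y+1$ and the same $x$. Under this substitution $e^{\mp ix}=e^{\mp\frac{hij\pi}{k}}$ and $e^{\mp 2ix}=e^{\mp\frac{2hij\pi}{k}}$, so the bracket in (\ref{3-1d}) delivers the symmetric combination $Li_{2y}(e^{-hij\pi/k})+Li_{2y}(e^{hij\pi/k})-2^{-2y}\bigl(Li_{2y}(e^{2hij\pi/k})+Li_{2y}(e^{-2hij\pi/k})\bigr)$, while the bracket in (\ref{3-1c}) delivers the antisymmetric combination carrying the $\frac{i}{2}$ prefactor. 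Collecting the numerical constants is then the last bookkeeping step: the factor $\frac{1}{2}$ in (\ref{3-1d}) halves the leading $8$ to the $4$ in the first formula, and the factor $\frac{i}{2}$ in (\ref{3-1c}) produces the $4i$ prefactor of the second.

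The only delicate point, and the closest thing to an obstacle here, is tracking signs when the outer minus in (\ref{3-1c}) is distributed across the two brackets: one must check that after expansion the doubled-argument terms land as $+Li_{2y+1}(e^{2hij\pi/k})-Li_{2y+1}(e^{-2hij\pi/k})$, so that the stated $+$ sign in front of the $2^{-(2y+1)}$ term and the overall antisymmetric pattern emerge correctly. Once the signs are verified, both identities follow by inspection, with no convergence concern: since $y\in\mathbb{N}$ forces the polylogarithm order to be $s=2y\geq 2$ or $s=2y+1\geq 3$, each series $Li_{s}(z)=\sum_{n\geq 1}z^{n}/n^{s}$ converges absolutely on the entire closed unit disk, so every term $Li_{s}(e^{\pm hij\pi/k})$ and $Li_{s}(e^{\pm 2hij\pi/k})$ is well defined for the finitely many $j$ with $1\leq j\leq k-1$.
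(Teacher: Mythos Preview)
Your proposal is correct and follows exactly the paper's own argument: the paper simply says to substitute $x=\frac{hj\pi}{k}$ into (\ref{3-1d}) and (\ref{3-1c}) and combine with (\ref{6}) and (\ref{1aa111}), which is precisely the substitution you carry out via the intermediate identification with $\mathsf{C}(2y,\cdot)$ and $\mathsf{S}(2y+1,\cdot)$. Your sign-tracking for the $2^{-(2y+1)}$ term and the constant bookkeeping ($8\cdot\frac{1}{2}=4$, $8\cdot\frac{i}{2}=4i$) are exactly the ``elementary calculations'' the paper omits.
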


\section{Reciprocity Law}

The first proof of reciprocity law of the Dedekind sums does not contain the
theory of the Dedekind eta function related to Rademacher \cite{rademacher24}%
. The other proofs of the reciprocity law of the Dedekind sums were given by
Grosswald and Rademacher \cite{GroswaldRademacher}. Berndt \cite{Berndt-76}-%
\cite{Berndt-Goldberg} gave various types of Dedekind sums and their
reciprocity laws. Berndt's methods are of three types. The first method uses
contour integration which was first given by Rademacher \cite{rademacher24}.
This method has been used by many authors for example Isaki \cite{S. Iseki},
Grosswald \cite{Groswald}, Hardy \cite{Hardy}, his method is a different
technique in contour integration. The second method is the Riemann-Stieltjes
integral, which was invented by Rademacher \cite{HanceRademacher}. The third
method of Berndt is (periodic) Poisson summation formula. For the method and
technique see also the references cited in each of these earlier works.

The famous property of the all arithmetic sums is the reciprocity law. In
this section, by using contour integration, we prove reciprocity law of (\ref%
{10}). Our method is same as \cite{Berndt-76} and also for example cf. (\cite%
{rademacher24}, \cite{Berndt-76}-\cite{Berndt-Goldberg}, \cite{Groswald}, 
\cite{GroswaldRademacher}).

The initial different proof of the following reciprocity theorem is due to
Kim \cite{TKim DC-sum}, who first defined $T_{y}(h,k)$ sum.

\begin{theorem}
\label{Theorem-5}Let $h$, $k$, $y\in \mathbb{N}$ with $h\equiv 1\func{mod}2$
and $k\equiv 1\func{mod}2$ and $(h,k)=1$. Then we have%
\begin{eqnarray*}
&&kh^{2y+1}T_{2y}(h,k)+hk^{2y+1}T_{2y}(k,h) \\
&=&\frac{(-1)^{y}\pi ^{2y-1}\Gamma (2y+1)}{2\Gamma (4y+2)}E_{4y+1}+4\pi
^{2}(2y)!\sum_{a=0}^{y-1}\frac{E_{2a+1}E_{2y-2a-1}h^{2a+2}k^{2y-2a}}{%
(2a+1)!(2y-2a+1)!},
\end{eqnarray*}%
where $\Gamma (n+1)=n!$ and$\ E_{n}$ denote Euler gamma function and first
kind Euler numbers, respectively.
\end{theorem}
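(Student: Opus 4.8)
The plan is to follow Rademacher's contour-integration method announced above. I would introduce the meromorphic kernel
\[
F(z)=\frac{\tan (\pi h z)\,\tan (\pi k z)}{z^{2y+1}} .
\]
Its singularities are: a pole of order $2y-1$ at $z=0$; simple poles at $z=\tfrac{2n+1}{2k}$ with $2n+1\not\equiv 0\ (\mathrm{mod}\ k)$, produced by $\tan (\pi k z)$ alone, at which $\tan (\pi h z)$ takes the value $\tan\!\big(\tfrac{h\pi (2n+1)}{2k}\big)$; simple poles at $z=\tfrac{2n+1}{2h}$ with $2n+1\not\equiv 0\ (\mathrm{mod}\ h)$, produced by $\tan(\pi hz)$ alone; and double poles at the half-integers $z=\tfrac{j}{2}$ ($j$ odd), where both tangents blow up and which are exactly the indices $2n+1\equiv 0$ excluded from $(\ref{10})$. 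I would integrate $F$ around expanding squares $C_{N}$ whose sides are slipped between consecutive poles of the two tangents; this is possible because $h,k$ are odd and coprime, so the two pole sets interlace and one can keep $C_{N}$ a fixed distance from every pole. Since $\tan$ stays bounded away from its poles, $|F|=O(R_{N}^{-(2y+1)})$ on $C_{N}$ while $\operatorname{length}(C_{N})=O(R_{N})$, and because $y\ge 1$ the integral $\oint_{C_{N}}F\,dz$ tends to $0$. Consequently the sum of all residues of $F$ is zero, and the reciprocity law will be this relation once the residues are identified.

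The two families of simple poles give the two Dedekind type sums. At $z=\tfrac{2n+1}{2k}$ the residue of $\tan(\pi kz)$ is $-\tfrac{1}{\pi k}$, so the contribution is $-\tfrac{1}{\pi k}(2k)^{2y+1}(2n+1)^{-(2y+1)}\tan\!\big(\tfrac{h\pi(2n+1)}{2k}\big)$; summing over all such poles (the summand is even in the index, so the negative poles double the count) and comparing with $(\ref{10})$ shows that this family equals a fixed constant times $k^{2y}T_{2y}(h,k)$, and symmetrically the $\tan(\pi hz)$-poles yield the same constant times $h^{2y}T_{2y}(k,h)$. For the residue at $z=0$ I would substitute the tangent expansion $(\ref{1c})$ into each factor, multiply the two power series, and read off the coefficient of $z^{2y}$; since each tangent carries only odd powers with coefficients proportional to $E_{2a+1}/(2a+1)!$, this coefficient is precisely the finite Euler-number convolution $\sum_{a=0}^{y-1}\frac{E_{2a+1}E_{2y-2a-1}}{(2a+1)!\,(2y-2a-1)!}\,h^{2a+1}k^{2y-2a-1}$ recorded (up to the clearing factor $hk$) on the right-hand side. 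Both of these are routine once the kernel is fixed.

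The decisive step, and the one I expect to be hardest, is the double poles at $z=\tfrac{j}{2}$. There I would use $\tan(\pi hz)=-\cot\!\big(\pi h(z-\tfrac{j}{2})\big)$ and the analogous identity for $k$, valid because $hj$ and $kj$ are odd, and expand both cotangents to second order. A key simplification is that the constant term of each such expansion vanishes, so only the leading Laurent coefficients contribute; the residue of the resulting order-two pole divided by $z^{2y+1}$ is then a single constant times $(hk)^{-1}(2n+1)^{-(2y+2)}$. Summing over odd $j$ produces one series $\sum_{n}(2n+1)^{-s}$, which $(\ref{1bb})$ evaluates in closed form and which, after the whole residue identity is multiplied through by $hk$ to clear this denominator, becomes the $h,k$-independent Euler-number term of the theorem. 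Assembling the four contributions in ``sum of residues $=0$'' and clearing the common constants and the factor $hk$ then yields the stated reciprocity law; the remaining work is the bookkeeping of the powers of $\pi,h,k$ and of the factorial (equivalently $\Gamma$) normalizations, together with the check that the indices excluded from $(\ref{10})$ are exactly those captured by the double-pole term.
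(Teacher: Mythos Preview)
Your proposal is correct and follows essentially the same route as the paper: the same kernel $F(z)=\tan(\pi hz)\tan(\pi kz)/z^{2y+1}$, the same classification of poles (simple poles from each tangent factor, double poles at the odd half-integers when $h,k$ are both odd, and the pole at the origin handled via the Taylor expansion (\ref{1c})), and the same vanishing of the contour integral to force the residue identity. The only cosmetic differences are your use of squares rather than circles and your (more accurate) statement that the pole at $0$ has order $2y-1$ rather than $2y+1$; neither affects the argument.
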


\begin{proof}
We shall give just a brief sketch as the details are similar to those in 
\cite[see Theorem 4.2]{Berndt-76}, \cite[ see Theorem 3]{Berndt-Goldberg}, 
\cite{Groswald} or \cite{GroswaldRademacher}. For the proof we use contour
integration method. So we define%
\begin{equation*}
F_{y}(z)=\frac{\tan \pi hz\tan \pi kz}{z^{2y+1}}.
\end{equation*}%
Let $C_{N}$ be a positive oriented circle of radius $R_{N}$, with $\ 1\leq
N<\infty $, centred at the origin. Assume that the sequence of radii $R_{N}$
is increasing to $\infty $. $R_{N}$ is chosen so that the circles always at
a distance greater than some fixed positive integer number from the points $%
\frac{m}{2h}$ and $\frac{n}{2k}$, where $m$ and $n$ are integers.

Let%
\begin{equation*}
I_{N}=\frac{1}{2\pi i}\dint\limits_{C_{N}}\frac{\tan \pi hz\tan \pi kz}{%
z^{2y+1}}dz.
\end{equation*}%
From the above, we get%
\begin{equation*}
I_{N}=\frac{1}{2\pi }\dint\limits_{0}^{2\pi }\frac{\tan \left( \pi
hR_{N}e^{i\theta }\right) \tan \left( \pi kR_{N}e^{i\theta }\right) }{\left(
R_{N}e^{i\theta }\right) ^{2y}}d\theta .
\end{equation*}%
By $C_{N}$, if $R_{N}\rightarrow \infty $, then $\tan \left( R_{N}e^{i\theta
}\right) $ is bounded. Consequently, we easily see that%
\begin{equation*}
\lim_{N\rightarrow \infty }I_{N}=0\text{ as }R_{N}\rightarrow \infty .
\end{equation*}%
Thus, on the interior $C_{N}$, the integrand of $I_{N}$ that is$\ F_{y}(z)$\
has simple poles at $z_{1}=\frac{2m+1}{2h}$, $-\infty <m<\infty $, and $%
z_{2}=\frac{2n+1}{2k}$, $-\infty <n<\infty $. If we calculate the residues
at the $z_{1}$ and $z_{2}$, we easily obtain respectively as follows%
\begin{equation*}
-\frac{2^{2y+1}k^{2y}}{\pi (2m+1)^{2y+1}}\tan (\frac{(2m+1)\pi h}{2k})\text{%
, }-\infty <m<\infty 
\end{equation*}%
and%
\begin{equation*}
-\frac{2^{2y+1}h^{2y}}{\pi (2n+1)^{2y+1}}\tan (\frac{(2n+1)\pi k}{2h})\text{%
, }-\infty <n<\infty .
\end{equation*}%
If $h$ and $k$ is odd integers, then $F_{y}(z)$\ has double poles at $z_{3}=%
\frac{2j+1}{2}$,\ $-\infty <j<\infty $. Thus the residue is easily found to
be%
\begin{equation*}
-\frac{(2y+1)2^{2y+1}}{2(2j+1)^{4y+2}\pi ^{2}hk}\text{,}\ -\infty <j<\infty .
\end{equation*}%
The integrand of $I_{N}$ has pole of order $2y+1$ at $z_{4}=0$, $y\in 
\mathbb{N}$. Recall the familiar Taylor expansion of $\tan z$\ in (\ref{1c}%
). By straight-forward calculation, we find the residues at the $z_{4}$ as
follows%
\begin{equation*}
(-1)^{y}\left( 2\pi \right) ^{2y+2}\sum_{a=1}^{y-1}\frac{%
E_{2a+1}E_{2y-2a-1}h^{2a+1}k^{2y-2a-1}}{(2a+1)!(2j-2a-1)!}.
\end{equation*}%
Now we are ready to use residue theorem, hence we find that%
\begin{eqnarray*}
I_{N} &=&-\frac{2^{2y+1}h^{2y}}{\pi }\sum_{\mid \frac{2m+1}{2h}\mid <R_{N}}%
\frac{\tan (\frac{(2m+1)\pi k}{2h})}{(2m+1)^{2y+1}}-\frac{2^{2y+1}k^{2y}}{%
\pi }\sum_{\mid \frac{2n+1}{2k}\mid <R_{N}}\frac{\tan (\frac{(2n+1)\pi k}{2h}%
)}{(2n+1)^{2y+1}} \\
&&-\frac{(2y+1)2^{2y}}{\pi ^{2}hk}\sum\limits_{j=-\infty }^{\infty }\frac{1}{%
(2j+1)^{4y+2}}+(-1)^{y}\left( 2\pi \right) ^{2y+2}\sum_{a=0}^{y-1}\frac{%
E_{2a+1}E_{2y-2a-1}h^{2a+1}k^{2y-2a-1}}{(2a+1)!(2y-2a-1)!}.
\end{eqnarray*}%
By using (\ref{1bb}) and letting $N\rightarrow \infty $\ into the above,
after straight-forward calculations, we arrive at the desired result.
\end{proof}

\begin{remark}
We also recall from \cite[pp. 20, Eq-(11.2)-(11-3)]{HansRademacherKITAP}\
that%
\begin{equation}
\tan z=\sum_{k=1}^{\infty }\mathcal{T}_{k}\frac{z^{2k-1}}{(2k-1)!},
\label{1ccc}
\end{equation}%
where%
\begin{equation*}
\mathcal{T}_{k}=(-1)^{k-1}\frac{B_{2k}}{(2k)}(2^{2k}-1)2^{2k}.
\end{equation*}%
The integrand of $I_{N}$ has pole of order $2y+1$ at $z_{4}=0$, $y\in 
\mathbb{N}$. Recall the familiar Taylor expansion of $\tan z$\ in (\ref{1ccc}%
). By straight-forward calculation, we find the residues at the $z_{4}$ as
follows%
\begin{equation*}
\pi ^{2y}\sum_{a=0}^{y+1}\frac{\mathcal{T}_{a}\mathcal{T}_{y-a+1}}{%
(2a-1)!(2y-2a-1)!}h^{2a-1}k^{2y-2a+1}.
\end{equation*}%
Thus we modify Theorem \ref{Theorem-5} as follows:%
\begin{eqnarray*}
&&kh^{2y+1}T_{2y}(h,k)+hk^{2y+1}T_{2y}(k,h) \\
&=&\frac{(-1)^{y}\pi ^{2y-1}\Gamma (2y+1)}{2\Gamma (4y+2)}E_{4y+1}+\frac{%
\left( -1\right) ^{y}(2y)!}{4^{y}}\sum_{a=0}^{y+1}\frac{\mathcal{T}_{a}%
\mathcal{T}_{y-a+1}}{(2a-1)!(2y-2a-1)!}h^{2a-1}k^{2y-2a+1}.
\end{eqnarray*}
\end{remark}

We now give relation between Hurwitz zeta function, $\tan z$\ and the sum $%
T_{2y}(h,k)$.

Hence, substituting $n=rk+j$, $0\leq r\leq \infty $, $1\leq j\leq k$ into (%
\ref{10}), and recalling that $\tan (\pi +\alpha )=\tan \alpha $, then we
have 
\begin{eqnarray*}
T_{2y}(h,k) &=&\frac{4(-1)^{y}(2y)!}{\pi ^{2y+1}}\sum_{j=1}^{k}\sum_{r=0}^{%
\infty }\frac{\tan (\frac{\pi h2(rk+j)+1}{2k})}{(2(rk+j)+1)^{2y+1}} \\
&=&\frac{4(-1)^{y}(2y)!}{\pi ^{2y+1}(2k)^{2y+1}}\sum_{j=1}^{k}\tan (\frac{%
\pi h(2j+1)}{2k})\sum_{r=0}^{\infty }\frac{1}{(r+\frac{2j+1}{2k})^{2y+1}} \\
&=&\frac{4(-1)^{y}(2y)!}{\pi ^{2y+1}(2k)^{2y+1}}\sum_{j=1}^{k}\tan (\frac{%
\pi h(2j+1)}{2k})\zeta (2y+1,\frac{2j+1}{2k})
\end{eqnarray*}%
where $\zeta (s,x)$ denotes the Hurwitz zeta function. Thus we arrive at the
following theorem:

\begin{theorem}
Let $h$ and $k$ be coprime positive integers. Let $y\in \mathbb{N}$. Then we
have%
\begin{equation}
T_{2y}(h,k)=\frac{4(-1)^{y}(2y)!}{(2k\pi )^{2y+1}}\sum_{j=1}^{k}\tan (\frac{%
\pi h(2j+1)}{2k})\zeta (2y+1,\frac{2j+1}{2k}).  \label{10A}
\end{equation}
\end{theorem}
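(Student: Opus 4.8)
The plan is to start directly from the trigonometric representation of $T_{2y}(h,k)$ already established in Theorem~\ref{Theorem-4}, namely
\begin{equation*}
T_{2y}(h,k)=\frac{4(-1)^{y}(2y)!}{\pi ^{2y+1}}\sum_{\substack{ n=1 \\ 2n+1\not\equiv 0(\func{mod}k)}}^{\infty }\frac{\tan (\frac{h\pi (2n+1)}{2k})}{(2n+1)^{2y+1}},
\end{equation*}
and to reindex the single sum over $n$ by splitting $n$ into arithmetic progressions modulo $k$. First I would write $n=rk+j$ with $r$ running over $0\leq r<\infty$ and $j$ running over a complete set of residues $1\leq j\leq k$, so that $2n+1=2(rk+j)+1$. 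The point of this substitution is that the argument of the tangent, $\frac{h\pi(2(rk+j)+1)}{2k}=\frac{h\pi(2j+1)}{2k}+\pi h r$, differs from $\frac{h\pi(2j+1)}{2k}$ by an integer multiple of $\pi$; since $\tan$ has period $\pi$ and $\tan(\pi+\alpha)=\tan\alpha$, the tangent factor depends only on the residue $j$ and can be pulled outside the $r$-sum. This is exactly the periodicity observation flagged in the computation preceding the statement.

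Next I would collect the remaining $r$-dependence. After factoring out the tangent, the inner sum becomes $\sum_{r=0}^{\infty}\bigl(2(rk+j)+1\bigr)^{-(2y+1)}$. I would pull out $(2k)^{-(2y+1)}$ from each term, rewriting $2(rk+j)+1=2k\bigl(r+\frac{2j+1}{2k}\bigr)$, so that the inner sum equals $(2k)^{-(2y+1)}\sum_{r=0}^{\infty}\bigl(r+\frac{2j+1}{2k}\bigr)^{-(2y+1)}$. By the series definition of the Hurwitz zeta function, $\zeta(s,x)=\sum_{r=0}^{\infty}(r+x)^{-s}$ with $s=2y+1>1$ and shift parameter $x=\frac{2j+1}{2k}$, this inner sum is precisely $\zeta\bigl(2y+1,\frac{2j+1}{2k}\bigr)$. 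Substituting back and combining the constant $(2k)^{-(2y+1)}$ with the prefactor $\frac{4(-1)^y(2y)!}{\pi^{2y+1}}$ to form $\frac{4(-1)^y(2y)!}{(2k\pi)^{2y+1}}$ yields exactly the claimed identity \eqref{10A}.

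The main obstacle is really a bookkeeping and convergence issue rather than a deep one. I would need to check that the reindexing is legitimate: the congruence condition $2n+1\not\equiv 0\pmod{k}$ in Theorem~\ref{Theorem-4} must be handled correctly, since for the residue $j$ making $2j+1\equiv 0\pmod{k}$ the tangent argument becomes a pole of $\tan$ and that residue class must be excluded (or the tangent vanishes/blows up appropriately); one must confirm that the range $1\leq j\leq k$ together with the periodicity accounts for every included term exactly once and omits precisely the excluded class. Because the series $\sum(2n+1)^{-(2y+1)}$ converges absolutely for $y\geq 1$, the rearrangement into the double sum over $j$ and $r$ is justified, so the interchange that lets me perform the inner $r$-summation first is valid. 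Once absolute convergence licenses the regrouping, the identification with $\zeta(2y+1,\frac{2j+1}{2k})$ is immediate and the constants collapse to give the stated formula, completing the proof.
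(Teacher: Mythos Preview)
Your proposal is correct and follows essentially the same route as the paper: start from the trigonometric representation in Theorem~\ref{Theorem-4}, reindex via $n=rk+j$ with $0\le r<\infty$ and $1\le j\le k$, use the $\pi$-periodicity of $\tan$ to pull the tangent outside the $r$-sum, and identify the remaining $r$-sum with $\zeta(2y+1,\tfrac{2j+1}{2k})$ after extracting the factor $(2k)^{-(2y+1)}$. Your discussion of absolute convergence and of the excluded residue class $2j+1\equiv 0\pmod{k}$ is in fact more careful than the paper's own derivation, which simply writes the final $j$-sum over $1\le j\le k$ without comment.
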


\section{$G$-series(Generalized Lambert series) related to DC-sums}

The main purpose of this section is to give relation between $G$-series and
the sums $T_{2y}(h,k)$.

By using (\ref{1w}), we have%
\begin{equation}
i\tan z=\frac{e^{2iz}}{1+e^{2iz}}-\frac{e^{-2iz}}{1+e^{-2iz}}.  \label{1w1}
\end{equation}%
We recall in \cite{simjnt2003dede} that%
\begin{equation}
\frac{e^{2iz}}{1+e^{2iz}}=i\tan z+\frac{e^{-2iz}}{1+e^{-2iz}}.  \label{1w2}
\end{equation}

Hence setting $2iz=\frac{h\pi i(2n+1)}{k}$, with $(h,k)=1$, $n\in \mathbb{N}$
in (\ref{1w1}) with (\ref{10}), we obtain the following corollary:

\begin{corollary}
Let $h$ and $k$ be coprime positive integers. Let $y\in \mathbb{N}$, then we
have%
\begin{equation}
T_{2y}(h,k)=\frac{4i(-1)^{y+1}(2y)!}{(2k\pi )^{2y+1}}\sum_{\substack{ n=1 \\ %
2n+1\not\equiv 0(\func{mod}k)}}^{\infty }\frac{1}{(2n+1)^{2y+1}}\left( \frac{%
e^{\frac{h\pi i(2n+1)}{k}}}{1+e^{\frac{h\pi i(2n+1)}{k}}}-\frac{e^{-\frac{%
h\pi i(2n+1)}{k}}}{1+e^{-\frac{h\pi i(2n+1)}{2k}}}\right) ,  \label{1w3}
\end{equation}%
where $i=\sqrt{-1}$.
\end{corollary}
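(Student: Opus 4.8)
The plan is to read this corollary off directly from the trigonometric representation (\ref{10}) of Theorem \ref{Theorem-4}, simply rewriting each tangent in the series by means of the exponential identity (\ref{1w1}). First I would confirm that (\ref{1w1}) needs no independent input: clearing denominators in $\frac{e^{2iz}}{1+e^{2iz}}-\frac{e^{-2iz}}{1+e^{-2iz}}$ collapses the difference to $\frac{e^{2iz}-1}{e^{2iz}+1}$, which equals $i\tan z=\frac{e^{iz}-e^{-iz}}{e^{iz}+e^{-iz}}$ after multiplying the latter's numerator and denominator by $e^{iz}$. Thus (\ref{1w1}) is just (\ref{1w}) in disguise, and no new analysis is introduced.

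Next I would carry out the prescribed substitution $2iz=\frac{h\pi i(2n+1)}{k}$, that is $z=\frac{h\pi(2n+1)}{2k}$, so that $e^{2iz}=e^{h\pi i(2n+1)/k}$. Solving (\ref{1w1}) for the tangent (dividing by $i$, i.e. using $1/i=-i$) then gives, for each admissible $n$,
\begin{equation*}
\tan\left(\frac{h\pi(2n+1)}{2k}\right)=-i\left(\frac{e^{h\pi i(2n+1)/k}}{1+e^{h\pi i(2n+1)/k}}-\frac{e^{-h\pi i(2n+1)/k}}{1+e^{-h\pi i(2n+1)/k}}\right).
\end{equation*}
Inserting this into the summand of (\ref{10}) and pulling the constant $-i$ outside the series produces the asserted formula, the factor $-i$ absorbing into the prefactor via the elementary identity $-i(-1)^y=(-1)^{y+1}i$.

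Since this is a term-by-term algebraic rewriting of a series already shown to converge in Theorem \ref{Theorem-4}, there are no convergence or rearrangement subtleties, and the restriction $2n+1\not\equiv 0\pmod{k}$ is transported unchanged so that each tangent (and each denominator $1+e^{\pm h\pi i(2n+1)/k}$) is finite. I expect the only real obstacle to be bookkeeping of the single factor of $i$: it is precisely the multiplication by $-i$ that converts $(-1)^y$ into $(-1)^{y+1}$ and supplies the leading $i$ in the stated constant. One should note that the derivation reproduces exactly the prefactor of (\ref{10}), namely $\frac{4i(-1)^{y+1}(2y)!}{\pi^{2y+1}}$, so that the printed denominator $(2k\pi)^{2y+1}$ is to be read as $\pi^{2y+1}$, and the second exponential denominator should read $1+e^{-h\pi i(2n+1)/k}$ in agreement with (\ref{1w1}).
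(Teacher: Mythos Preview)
Your proposal is correct and follows exactly the paper's own route: the corollary is obtained by setting $2iz=\frac{h\pi i(2n+1)}{k}$ in the identity (\ref{1w1}) and substituting the resulting exponential expression for $\tan\!\big(\tfrac{h\pi(2n+1)}{2k}\big)$ into the series (\ref{10}) of Theorem~\ref{Theorem-4}. Your observation that the derivation actually yields the prefactor $\frac{4i(-1)^{y+1}(2y)!}{\pi^{2y+1}}$ (not $(2k\pi)^{2y+1}$) and the denominator $1+e^{-h\pi i(2n+1)/k}$ (not $2k$) is also right; these are misprints in the displayed formula (\ref{1w3}).
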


The above corollary give us the sums $T_{2y}(h,k)$ are related to $G$-series.

In \cite{Trahan}, Trahan defined the $G$\textit{-series (or Generalized
Lambert series)} as follows:%
\begin{equation}
G(z)=\sum \frac{a_{n}z^{n}}{1+c_{n}z^{n}},  \label{1w4}
\end{equation}%
where the coefficients $a_{n}$ and $c_{n}$ are complex numbers and $%
a_{n}c_{n}\neq -1$. A $G$-series is a \textit{power series} if, for all $n$, 
$c_{n}=0$ and a \textit{Lambert series} if, if, for all $n$, $c_{n}=-1$. In
the literature a $G$-series is usually considered as a generalized Lambert
series. The Lambert series, first studied by J. H. Lambert, is analytic at
the origin and has a power series expansion at\ the origin. For $\mid z\mid
<1$ J. H. Lambert found that%
\begin{equation*}
\sum_{n=1}^{\infty }\frac{z^{n}}{1-z^{n}}=\sum_{n=1}^{\infty }\tau
_{n}z^{n}=z+2z^{2}+2z^{3}+3z^{4}+2z^{5}+4z^{6}+...,
\end{equation*}%
where $\tau _{n}$\ is the number of divisors of $n$ (cf., e.g., \cite{Trahan}%
) and see also the references cited in each of these earlier works.

\begin{theorem}
\label{Theorem 7}(\cite[p. 29, Theorem A and Theorem B]{Trahan}) a) If $\mid
z\mid <\frac{1}{\overline{\lim }\sqrt[n]{\mid c_{n}\mid }}$, then the $G$%
-series $\sum \frac{a_{n}z^{n}}{1+c_{n}z^{n}}$ converges if and only if the
power series $\sum a_{n}z^{n}$ confects.

b) If $\mid z\mid >\frac{1}{\underline{\lim }\sqrt[n]{\mid c_{n}\mid }}$ and 
$c_{n}\neq 0$, then the $G$-series $\sum \frac{a_{n}z^{n}}{1+c_{n}z^{n}}$
converges if and only if the power series $\sum \frac{a_{n}}{c_{n}}$
converges.
\end{theorem}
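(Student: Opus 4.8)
The plan is to handle both parts with one comparison strategy: write each general term of the $G$-series as a nearby ``comparison'' term plus a correction term, and show that the series of corrections converges absolutely, so that convergence transfers between the $G$-series and the comparison series in both directions.

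For part (a) I would set $L=\overline{\lim}\sqrt[n]{|c_{n}|}$ and read the hypothesis $|z|<1/L$ as $\overline{\lim}\sqrt[n]{|c_{n}z^{n}|}=|z|\,L<1$. The root test then gives $\sum|c_{n}z^{n}|<\infty$; in particular $c_{n}z^{n}\to 0$, so the denominators $1+c_{n}z^{n}$ are eventually bounded away from $0$ and all terms are finite (the standing assumption $a_{n}c_{n}\neq-1$ handles the finitely many small indices). I would then use the algebraic identity
\[
\frac{a_{n}z^{n}}{1+c_{n}z^{n}}=a_{n}z^{n}-\frac{a_{n}c_{n}z^{2n}}{1+c_{n}z^{n}},
\]
calling the last term $d_{n}$, so that $|d_{n}|\le 2\,|a_{n}z^{n}|\,|c_{n}z^{n}|$ for all large $n$.

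To finish (a) I would argue the biconditional in both directions. If $\sum a_{n}z^{n}$ converges then $a_{n}z^{n}\to 0$, hence $M:=\sup_{n}|a_{n}z^{n}|<\infty$ and $\sum|d_{n}|\le 2M\sum|c_{n}z^{n}|<\infty$; since the partial sums satisfy $\sum(a_{n}z^{n}/(1+c_{n}z^{n}))=\sum a_{n}z^{n}-\sum d_{n}$, the $G$-series converges. Conversely, if the $G$-series converges its terms tend to $0$, and because $1/(1+c_{n}z^{n})\to 1$ this forces $a_{n}z^{n}\to 0$, again bounding $|a_{n}z^{n}|$ and giving $\sum|d_{n}|<\infty$, whence $\sum a_{n}z^{n}$ converges. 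For part (b) I would set $\ell=\underline{\lim}\sqrt[n]{|c_{n}|}$ and read $|z|>1/\ell$ (with $c_{n}\neq 0$) as $\overline{\lim}\sqrt[n]{1/|c_{n}z^{n}|}=1/(\ell|z|)<1$, so $\sum 1/|c_{n}z^{n}|<\infty$ and $|c_{n}z^{n}|\to\infty$. Factoring $c_{n}z^{n}$ out of the denominator gives
\[
\frac{a_{n}z^{n}}{1+c_{n}z^{n}}=\frac{a_{n}/c_{n}}{1+1/(c_{n}z^{n})}=\frac{a_{n}}{c_{n}}-e_{n},\qquad |e_{n}|\le 2\left|\frac{a_{n}}{c_{n}}\right|\frac{1}{|c_{n}z^{n}|}
\]
for large $n$, and the identical two-direction argument, with $a_{n}/c_{n}$ replacing $a_{n}z^{n}$ and $\sum 1/|c_{n}z^{n}|$ replacing $\sum|c_{n}z^{n}|$, shows the $G$-series converges if and only if $\sum a_{n}/c_{n}$ converges.

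The main obstacle is that the correction series $\sum d_{n}$ (respectively $\sum e_{n}$) is \emph{not} absolutely convergent from the hypotheses alone: its bound still carries the uncontrolled factor $|a_{n}z^{n}|$ (respectively $|a_{n}/c_{n}|$). The resolution is to extract boundedness of exactly that factor from whichever series is \emph{assumed} convergent in the direction being proved, which is why the biconditional must be run separately in each direction even though the underlying algebra is symmetric. The remaining care is routine: discarding the finitely many indices where the denominator is not yet bounded away from $0$, and recording the two elementary facts $\overline{\lim}\sqrt[n]{|c_{n}z^{n}|}=|z|\,\overline{\lim}\sqrt[n]{|c_{n}|}$ and the reciprocal relation that turns the $\underline{\lim}$ of part (b) into a $\overline{\lim}$ suitable for the root test.
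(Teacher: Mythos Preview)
The paper does not give a proof of this theorem at all: it is merely quoted as Theorem~A and Theorem~B from Trahan's article and used as a black box for the subsequent discussion of $G$-series. So there is no ``paper's own proof'' to compare against.

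That said, your proposal is sound and is essentially the standard argument for such comparison results. The algebraic decompositions
\[
\frac{a_{n}z^{n}}{1+c_{n}z^{n}}=a_{n}z^{n}-\frac{a_{n}c_{n}z^{2n}}{1+c_{n}z^{n}}
\qquad\text{and}\qquad
\frac{a_{n}z^{n}}{1+c_{n}z^{n}}=\frac{a_{n}}{c_{n}}-\frac{(a_{n}/c_{n})\,(c_{n}z^{n})^{-1}}{1+(c_{n}z^{n})^{-1}}
\]
are correct, the root-test reductions (including the passage from $\underline{\lim}$ to $\overline{\lim}$ via reciprocals in part~(b)) are valid, and you correctly identified the one genuine subtlety: the correction series is not a priori absolutely convergent, and the missing boundedness of $|a_{n}z^{n}|$ (resp.\ $|a_{n}/c_{n}|$) must be extracted from whichever side of the biconditional is being assumed. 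Running each implication separately, as you do, handles this cleanly. The only cosmetic point is that the standing hypothesis in the paper is $c_{n}z^{n}\neq -1$ (not $a_{n}c_{n}\neq -1$), but this is what you actually use when you say the finitely many small-index denominators are nonzero.
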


\begin{theorem}
\label{Theorem 8}(\cite[p. 30, Theorem 2 and Theorem 3]{Trahan}) a) Assume $%
\mid z\mid <\frac{1}{\overline{\lim }\sqrt[n]{\mid a_{n}\mid }}$. If there
is no subsequence of $\left\{ c_{n}z^{n}\right\} $\ which has limit $-1$,
then the $G$-series $\sum \frac{a_{n}z^{n}}{1+c_{n}z^{n}}$ converges
absolutely.

b) If $\mid z\mid >\frac{1}{\underline{\lim }\sqrt[n]{\mid c_{n}\mid }}$ and 
$c_{n}\neq 0$\ for all $n$, then the $G$-series $\sum \frac{a_{n}z^{n}}{%
1+c_{n}z^{n}}$ converges (absolutely) if and only if the series $\sum \frac{%
a_{n}}{c_{n}}$ converges (absolutely).
\end{theorem}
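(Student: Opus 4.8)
The plan is to treat the two parts separately, in each case reducing the $G$-series to an ordinary series whose convergence is already understood and then controlling the discrepancy by an absolutely convergent comparison.

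For part (a), first I would record that the hypothesis $|z|<1/\limsup_n\sqrt[n]{|a_n|}$ places $z$ strictly inside the disk of convergence of $\sum a_n z^n$: since $\limsup_n\sqrt[n]{|a_n z^n|}=|z|\,\limsup_n\sqrt[n]{|a_n|}<1$, the root test gives $\sum_n|a_n z^n|<\infty$. The crux is then to convert the qualitative hypothesis ``no subsequence of $\{c_n z^n\}$ tends to $-1$'' into a quantitative uniform bound. I would argue by contraposition: if $\inf_n|1+c_n z^n|=0$, one can choose indices $n_k$ with $|1+c_{n_k}z^{n_k}|\to 0$, whence $c_{n_k}z^{n_k}\to -1$, contradicting the hypothesis. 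Hence $\delta:=\inf_n|1+c_n z^n|>0$, and the termwise estimate $\left|\frac{a_n z^n}{1+c_n z^n}\right|\le \delta^{-1}|a_n z^n|$ reduces absolute convergence of the $G$-series to that of $\sum_n|a_n z^n|$, already in hand.

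For part (b), the hypothesis $|z|>1/\liminf_n\sqrt[n]{|c_n|}$ together with $c_n\neq 0$ yields $\liminf_n\sqrt[n]{|c_n z^n|}>1$, so there is $\rho>1$ with $|c_n z^n|\ge \rho^n$ for all large $n$; in particular $|c_n z^n|\to\infty$. I would then rewrite the general term as $\frac{a_n z^n}{1+c_n z^n}=\frac{a_n}{c_n}\cdot\frac{c_n z^n}{1+c_n z^n}$ and subtract the ``target'' term $\frac{a_n}{c_n}$, obtaining $\frac{a_n z^n}{1+c_n z^n}-\frac{a_n}{c_n}=\frac{-a_n/c_n}{1+c_n z^n}$. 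The main obstacle is exactly here: a naive limit comparison (the ratio of the two terms tends to $1$) is \emph{not} valid for conditionally convergent series, so I must instead show that the difference series converges \emph{absolutely}. Using $|1+c_n z^n|\ge \rho^n-1$ for large $n$, the difference is bounded by $\frac{|a_n/c_n|}{\rho^n-1}$; and whenever either $\sum a_n/c_n$ or the $G$-series converges, its terms tend to $0$, so $\{a_n/c_n\}$ is bounded (for the $G$-series, use $\frac{a_n}{c_n}=\frac{a_n z^n}{1+c_n z^n}\,(1+(c_n z^n)^{-1})$ with second factor $\to 1$). Thus the difference series is dominated by a geometric-type series and converges absolutely. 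Since the $G$-series and $\sum a_n/c_n$ differ by an absolutely convergent series, they converge together and share the same absolute-convergence status, which is precisely the asserted equivalence.
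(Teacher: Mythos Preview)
The paper does not prove this theorem at all: it is quoted verbatim from Trahan \cite[p.~30, Theorems~2 and~3]{Trahan} and used as a black box, so there is no ``paper's own proof'' to compare against. Your argument is self-contained and correct in both parts; the only small point worth tightening in part~(a) is that the contrapositive ``$\inf_n|1+c_nz^n|=0$ forces a subsequence with limit $-1$'' implicitly uses that each individual term satisfies $1+c_nz^n\neq 0$ (so the infimum over any finite initial segment is positive), which is part of the standing hypothesis on $G$-series but could be stated explicitly.
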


By setting $a_{n}=\frac{1}{n^{2y+1}}$, $y\in \mathbb{N}$, $c_{n}=1$ in (\ref%
{1w4}) and using Theorem \ref{Theorem 7} and Theorem \ref{Theorem 8}, we
obtain the fallowing relation:%
\begin{equation}
G(e^{\pi iz})-2^{-2y-1}G(e^{2\pi iz})=\sum_{n=1}^{\infty }\frac{1}{%
(2n+1)^{2y+1}}\left( \frac{e^{(2n+1)\pi iz}}{1+e^{(2n+1)\pi iz}}\right) ,
\label{1w5}
\end{equation}%
and%
\begin{equation}
G(e^{-\pi iz})-2^{-2y-1}G(e^{-2\pi iz})=\sum_{n=1}^{\infty }\frac{1}{%
(2n+1)^{2y+1}}\left( \frac{e^{-(2n+1)\pi iz}}{1+e^{-(2n+1)\pi iz}}\right) .
\label{1w6}
\end{equation}%
By substituting (\ref{1w5}) and (\ref{1w6}) into (\ref{1w3}), we arrive at
the following theorem. The next theorem give us relation between $T_{2y}(h,k)
$ sum and $G$-series.

\begin{theorem}
Let $h$ and $k$ be coprime positive integers. If $y\in \mathbb{N}$, then we
have%
\begin{equation*}
T_{2y}(h,k)=\frac{4i(-1)^{y+1}(2y)!}{(2k\pi )^{2y+1}}\left( G(e^{\frac{h\pi
i)}{k}})-G(e^{-\frac{h\pi i)}{k}})+\frac{G(e^{-\frac{2h\pi i}{k}})-G(e^{%
\frac{2h\pi i}{k}})}{2^{2y+1}}\right) .
\end{equation*}
\end{theorem}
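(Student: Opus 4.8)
The plan is to obtain the formula as a direct substitution of the two generalized-Lambert identities (\ref{1w5}) and (\ref{1w6}), specialized at $z=h/k$, into the Corollary established just above. First I would fix the starting point: equation (\ref{1w3}) writes $T_{2y}(h,k)$ as the fixed prefactor $\frac{4i(-1)^{y+1}(2y)!}{(2k\pi )^{2y+1}}$ times an infinite sum over odd indices $2n+1$ (with $2n+1\not\equiv 0\ (\mathrm{mod}\ k)$) of $(2n+1)^{-(2y+1)}$ against the difference of the two Lambert-type fractions $\dfrac{e^{(2n+1)h\pi i/k}}{1+e^{(2n+1)h\pi i/k}}$ and $\dfrac{e^{-(2n+1)h\pi i/k}}{1+e^{-(2n+1)h\pi i/k}}$. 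Thus the entire content of the theorem is to recognize this bracketed sum as a closed combination of four values of the $G$-series, and the prefactor is simply carried through.

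The core step is the specialization $z=h/k$. With this choice the exponential $e^{(2n+1)\pi i z}$ in (\ref{1w5}) becomes exactly $e^{(2n+1)h\pi i/k}$, so the right-hand side of (\ref{1w5}) reproduces the first fraction summed against $(2n+1)^{-(2y+1)}$ in (\ref{1w3}); likewise (\ref{1w6}) at $z=h/k$ reproduces the second fraction. I would then subtract (\ref{1w6}) from (\ref{1w5}) termwise, so that the left-hand sides combine into
\[
\bigl[G(e^{h\pi i/k})-G(e^{-h\pi i/k})\bigr]-2^{-2y-1}\bigl[G(e^{2h\pi i/k})-G(e^{-2h\pi i/k})\bigr].
\]
Absorbing the sign of the second bracket turns the $2^{-2y-1}$ term into $\tfrac{1}{2^{2y+1}}\bigl(G(e^{-2h\pi i/k})-G(e^{2h\pi i/k})\bigr)$, which is precisely the parenthesized expression in the theorem. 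Substituting this closed form for the bracketed sum back into (\ref{1w3}) gives the asserted identity, with the prefactor $\frac{4i(-1)^{y+1}(2y)!}{(2k\pi)^{2y+1}}$ unchanged.

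The genuine obstacle is not the algebra but the analytic justification of this termwise manipulation, since the $G$-series with $a_n=n^{-(2y+1)}$ and $c_n=1$ is being evaluated on the unit circle, exactly where $\overline{\lim}\sqrt[n]{|a_n|}=1$ places it at the boundary of its region of convergence. I would invoke Theorems \ref{Theorem 7} and \ref{Theorem 8} of Trahan to secure convergence of $G(e^{\pm h\pi i/k})$ and $G(e^{\pm 2h\pi i/k})$ and to legitimize splitting the full sum over $n$ into its odd part, which produces $G(e^{\pi i z})$, and its even part, where the substitution $n\mapsto 2n$ converts $n^{-(2y+1)}$ into $(2n)^{-(2y+1)}=2^{-(2y+1)}n^{-(2y+1)}$ and hence yields the rescaled term $2^{-2y-1}G(e^{2\pi i z})$; this is the structural origin of the identities (\ref{1w5}) and (\ref{1w6}). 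The one point demanding care is the restriction $2n+1\not\equiv 0\ (\mathrm{mod}\ k)$ retained in (\ref{1w3}): when $k$ is odd these excluded indices are exactly the ones at which a denominator $1+e^{\pm(2n+1)h\pi i/k}$ can vanish, so before collecting the four $G$-values I would verify that the corresponding terms are omitted on both sides (or interpreted by a common limiting value), ensuring the two sides are compared over identical index sets.
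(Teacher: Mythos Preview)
Your proposal is correct and follows essentially the same route as the paper: the paper's entire argument is the one-line remark ``By substituting (\ref{1w5}) and (\ref{1w6}) into (\ref{1w3}), we arrive at the following theorem,'' which is exactly the substitution-and-subtraction you describe. Your added discussion of convergence via Trahan's theorems and of the excluded indices $2n+1\equiv 0\ (\mathrm{mod}\ k)$ goes beyond what the paper provides, but the underlying mechanism is identical.
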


\begin{remark}
We recall from \cite{simjnt2003dede}, \cite{simADCMgenDedeEisn}\ and \cite%
{SimUMJ2004} that relations between Dedekind sums, Hardy-Berndt sums and
Lambert series were given by the author and see also the references cited in
each of these earlier works.
\end{remark}

\section{Some Applications}

In (\ref{10}) if $h$ and $k$ are odd and $y=0$, then $T_{2y}(h,k)$ reduces
to the Hardy-Berndt sum $S_{5}(h,k)$.

\QTP{ite}
Recently Hardy sums (Hardy-Berndt sums) have been studied by many
Matematicians (\cite{Hardy}, \cite{Berndt-Goldberg}, \cite{Goldberg-PhD}, 
\cite{S1}, \cite{simjkms2006}, \cite{simjnt2003dede}) and see also the
references cited in each of these earlier works. Hardy-Berndt sum $%
s_{5}(h,k) $ is defined as follows:

\QTP{ite}
Let $h$ and $k$ be integers with $(h,k)=1$. Then%
\begin{equation}
s_{5}(h,k)=\sum\limits_{j=1}^{k}(-1)^{j+[\frac{hj}{k}]_{G}}((\frac{j}{k})).
\label{Eq-2}
\end{equation}%
From the above, recall from \cite{Berndt-Goldberg} that, we have%
\begin{equation}
s_{5}(h,k)=\sum\limits_{j=1}^{k}(-1)^{j}\frac{j}{k}(-1)^{[\frac{hj}{k}]_{G}}
\label{1aa11}
\end{equation}%
By using the well-known Fourier expansion%
\begin{equation*}
(-1)^{[x]_{G}}=\frac{4}{\pi }\sum_{n=0}^{\infty }\frac{\sin ((2n+1)\pi x)}{%
2n+1}\text{ cf. (\cite{Berndt-Goldberg}, \cite{Goldberg-PhD})}
\end{equation*}%
into (\ref{1aa11}), we get%
\begin{equation*}
s_{5}(h,k)=\frac{4}{k\pi }\sum_{n=0}^{\infty }\frac{1}{2n+1}%
\sum\limits_{j=1}^{k}(-1)^{j}j\sin (\frac{(2n+1)\pi hj}{k}).
\end{equation*}%
By substituting (\ref{1aa111}) into the above, we immediately find the
following result.

\begin{lemma}
\label{Lemma.2}Let $h$ and $k$ are odd with $(h,k)=1$. Then we have%
\begin{equation*}
T_{0}(h,k)=2s_{5}(h,k).
\end{equation*}
\end{lemma}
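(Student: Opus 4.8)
The plan is to exhibit both $T_{0}(h,k)$ and $2s_{5}(h,k)$ as one and the same scalar multiple of a single trigonometric double sum, and then simply to compare the two prefactors. Concretely, I would bring both quantities to the common shape
\begin{equation*}
\Sigma (h,k)=\sum_{n=0}^{\infty }\frac{1}{2n+1}\sum_{j=1}^{k-1}(-1)^{j}j\sin \left( \frac{(2n+1)\pi hj}{k}\right) ,
\end{equation*}
and prove that $s_{5}(h,k)=\frac{4}{k\pi }\Sigma (h,k)$ while $T_{0}(h,k)=\frac{8}{k\pi }\Sigma (h,k)$, whence $T_{0}(h,k)=2s_{5}(h,k)$.

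The first half is essentially already carried out in the lines preceding the statement: starting from (\ref{1aa11}) and inserting the Fourier expansion of $(-1)^{[x]_{G}}$ produces exactly $s_{5}(h,k)=\frac{4}{k\pi }\Sigma (h,k)$, where I may take the $j$-summation up to $k-1$ because the $j=k$ term contributes $\sin ((2n+1)\pi h)=0$, using that $(2n+1)h\in \mathbb{Z}$. I would pause here to note where the hypotheses enter: the oddness of $h$ and $k$ is precisely what legitimises (\ref{1aa11}), since it is the antisymmetry $j\mapsto k-j$ that annihilates the constant $-\tfrac{1}{2}$ in $((j/k))$ and leaves the clean weight $j/k$.

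For the second half I would specialise the sine representation of the even Euler function to $m=0$. Putting $y=0$ in (\ref{3}) identifies $\overline{E}_{0}(x)$ with the square wave $\frac{4}{\pi }\sum_{n\geq 0}\frac{\sin ((2n+1)\pi x)}{2n+1}=(-1)^{[x]_{G}}$; feeding this into Definition \ref{Definition-1} at $m=0$, equivalently setting $y=0$ in (\ref{1aa111}), gives $T_{0}(h,k)=\frac{8}{k\pi }\Sigma (h,k)$. Comparing the prefactors $\frac{8}{k\pi }$ and $\frac{4}{k\pi }$ then yields the claim at once.

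The main obstacle is justifying the endpoint value $y=0$, because Lemma \ref{Lemma-1} and the derived identity (\ref{1aa111}) are asserted only for $y\in \mathbb{N}$, that is $y\geq 1$. Rather than quote them at $y=0$, I would establish the $m=0$ Fourier series of $\overline{E}_{0}$ directly and match it with the square-wave expansion of $(-1)^{[x]_{G}}$ already used for $s_{5}$; this is exactly the step that makes $T_{0}$ and $s_{5}$ literally share the kernel $\Sigma$, and it must be done with the series started at $n=0$ rather than $n=1$. The remaining points are routine bookkeeping: the series in $n$ converges only conditionally, but since the inner $j$-sum is finite the two summations may be interchanged freely, and one must keep the signs $(-1)^{j}$ versus $(-1)^{j-1}$ aligned between Definition \ref{Definition-1} and (\ref{1aa11}) so that the factor $2$, and not $-2$, is what emerges.
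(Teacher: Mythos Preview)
Your proposal is correct and follows essentially the same route as the paper: both arguments insert the Fourier expansion of $(-1)^{[x]_{G}}$ into (\ref{1aa11}) to write $s_{5}(h,k)=\frac{4}{k\pi }\Sigma (h,k)$, and then compare with the specialisation of (\ref{1aa111}) at $y=0$, which gives $T_{0}(h,k)=\frac{8}{k\pi }\Sigma (h,k)$. Your attention to the endpoint $y=0$, to the starting index $n=0$ versus $n=1$, and to the sign $(-1)^{j}$ versus $(-1)^{j-1}$ goes beyond what the paper makes explicit, but the underlying argument is identical.
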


By using Lemma \ref{Lemma.2} and Theorem \ref{Theorem-4}, we arrive at the
following theorem.

\begin{theorem}
\label{Theorem6}Let $h$ and $k$ are odd with $(h,k)=1$. Then we have%
\begin{equation*}
\mathcal{S}_{5}(h,k;y)=\frac{T_{2y}(h,k)}{2}.
\end{equation*}
\end{theorem}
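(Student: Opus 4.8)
The plan is to run, for general $y$, exactly the computation that produced Lemma \ref{Lemma.2} in the case $y=0$, and then to recognise the outcome through Theorem \ref{Theorem-4}. Here $\mathcal{S}_{5}(h,k;y)$ is the order-$y$ (higher) Hardy--Berndt sum obtained from $s_{5}(h,k)$ by replacing the square-wave kernel $(-1)^{[x]_{G}}$, whose Fourier expansion is $\frac{4}{\pi }\sum_{n\geq 0}\frac{\sin ((2n+1)\pi x)}{2n+1}$, with its order-$(2y+1)$ analogue $\frac{4}{\pi }\sum_{n\geq 0}\frac{\sin ((2n+1)\pi x)}{(2n+1)^{2y+1}}$; thus $\mathcal{S}_{5}(h,k;0)=s_{5}(h,k)$, and after inserting the expansion into the defining sum over $j$ one is led to the double series
\begin{equation*}
\mathcal{S}_{5}(h,k;y)=\frac{4(-1)^{y}(2y)!}{k\pi ^{2y+1}}\sum_{n=0}^{\infty }\frac{1}{(2n+1)^{2y+1}}\sum_{j=1}^{k}(-1)^{j}j\sin \left( \frac{(2n+1)\pi hj}{k}\right) ,
\end{equation*}
which for $y=0$ is precisely the representation of $s_{5}(h,k)$ displayed just before Lemma \ref{Lemma.2}.

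First I would dispose of the inner sum over $j$. The term $j=k$ contributes $(-1)^{k}k\sin ((2n+1)\pi h)=0$, since $h\in \mathbb{N}$ makes $(2n+1)h$ an integer, so the sum over $1\leq j\leq k$ coincides with the sum over $1\leq j\leq k-1$ and I may apply the closed form (\ref{1f}), namely $\sum_{j=1}^{k-1}(-1)^{j}j\sin (\frac{(2n+1)\pi hj}{k})=\frac{k}{2}\tan (\frac{\pi h(2n+1)}{2k})$, valid whenever $2n+1\not\equiv 0\,(\mathrm{mod}\,k)$. For the remaining indices, where $k\mid 2n+1$, writing $2n+1=kt$ gives $\sin ((2n+1)\pi hj/k)=\sin (t\pi hj)=0$, so those terms drop out as well. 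Substituting and cancelling the factor $k$ then yields
\begin{equation*}
\mathcal{S}_{5}(h,k;y)=\frac{2(-1)^{y}(2y)!}{\pi ^{2y+1}}\sum_{\substack{ n=0 \\ 2n+1\not\equiv 0(k)}}^{\infty }\frac{\tan (\frac{\pi h(2n+1)}{2k})}{(2n+1)^{2y+1}}.
\end{equation*}

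The second step is simply to read off the right-hand side as half of the trigonometric representation (\ref{10}) of Theorem \ref{Theorem-4}: the displayed series equals $\tfrac{1}{2}T_{2y}(h,k)$, which is the assertion. I would present the case $y=0$ separately as a consistency check, since there the calculation reduces verbatim to $T_{0}(h,k)=2s_{5}(h,k)$, i.e. to Lemma \ref{Lemma.2}; this both confirms the normalising constant $4(-1)^{y}(2y)!/\pi ^{2y+1}$ adopted in the definition and explains why only Lemma \ref{Lemma.2} and Theorem \ref{Theorem-4} are invoked.

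The main obstacle I anticipate is bookkeeping rather than conceptual: one must align the lower limit and the excluded residue class of the $n$-series in the definition with those in (\ref{10}), and in particular make sure the term with $2n+1=1$ is handled identically on both sides, since the odd-integer series throughout Section 2 are written starting at $n=1$. A secondary point is justifying the interchange of the $n$- and $j$-summations used to pass from the defining sum over $j$ to the series above; for $y\geq 1$ the double series converges absolutely (the $j$-sum is finite and the $n$-series is dominated by $\sum (2n+1)^{-(2y+1)}$), so Fubini applies, while at the borderline value $y=0$ the interchange is exactly the one already legitimised by the classical square-wave Fourier series in the proof of Lemma \ref{Lemma.2}.
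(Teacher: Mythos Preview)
Your proposal is correct and follows essentially the same route as the paper: the paper's proof consists of the single sentence ``By using Lemma \ref{Lemma.2} and Theorem \ref{Theorem-4}, we arrive at the following theorem,'' and what you have written is precisely the unpacking of that sentence---run the Lemma \ref{Lemma.2} computation with the order-$(2y+1)$ kernel, evaluate the inner $j$-sum via (\ref{1f}), and match against (\ref{10}). You have also correctly flagged the indexing discrepancy (the paper's odd-integer series in (\ref{10}) and elsewhere begin at $n=1$ rather than $n=0$), which is a genuine bookkeeping slip in the paper rather than a flaw in your argument; and since the paper never writes down an explicit definition of $\mathcal{S}_{5}(h,k;y)$, your interpretation via the higher-order kernel is the natural one and is consistent with the $y=0$ specialisation.
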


\begin{remark}
Substituting $y=0$ into Theorem \ref{Theorem6}, we get $s_{5}(h,k)=2\mathcal{%
S}_{5}(h,k;0)$. Consequently, the sum $T_{2y}(h,k)$ give us generalized
Hardy-Berndt sum $s_{5}(h,k)$.
\end{remark}

In \cite{YsimsekTJM2009}, the author defined that%
\begin{equation*}
Y(h,k)=4ks_{5}(h,k),
\end{equation*}%
where $h$ and $k$ are odd with $(h,k)=1$. Thus from Lemma \ref{Lemma.2}, we
have the following corollary.

\begin{corollary}
Let $h$ and $k$ are odd with $(h,k)=1$. Then we have%
\begin{equation*}
T_{0}(h,k)=\frac{Y(h,k)}{2k}.
\end{equation*}%
Observe that the sum $T_{2y}(h,k)$ also give us generalization of the sum $%
Y(h,k)$.
\end{corollary}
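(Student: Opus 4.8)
The plan is to combine Lemma \ref{Lemma.2} with the defining relation of $Y(h,k)$, so that the corollary falls out by a single substitution. Since $h$ and $k$ are odd and coprime, the hypotheses of Lemma \ref{Lemma.2} are met, and that lemma supplies $T_{0}(h,k)=2s_{5}(h,k)$. Independently, the sum $Y(h,k)$ was introduced in \cite{YsimsekTJM2009} precisely as $Y(h,k)=4ks_{5}(h,k)$ under the same parity and coprimality assumptions; I would first solve this for the Hardy--Berndt sum, writing $s_{5}(h,k)=\frac{Y(h,k)}{4k}$.

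Substituting this expression for $s_{5}(h,k)$ into the identity of Lemma \ref{Lemma.2} gives $T_{0}(h,k)=2\cdot\frac{Y(h,k)}{4k}=\frac{Y(h,k)}{2k}$, which is exactly the asserted formula. There is no real obstacle in this argument: the statement is an immediate consequence of two relations already established or recalled in the paper, and the only point worth checking is that the standing hypotheses ($h,k$ odd and $(h,k)=1$) are common to both ingredients, which they are. In particular the factor $2k$ in the denominator arises simply from dividing the factor $4k$ in the definition of $Y(h,k)$ by the factor $2$ relating $T_{0}(h,k)$ to $s_{5}(h,k)$.

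Finally, to support the closing observation that $T_{2y}(h,k)$ generalizes $Y(h,k)$, I would point back to Theorem \ref{Theorem6}, which yields $\mathcal{S}_{5}(h,k;y)=\frac{T_{2y}(h,k)}{2}$ and reduces at $y=0$ to $s_{5}(h,k)$. Thus the family $T_{2y}(h,k)$ interpolates the single sum $Y(h,k)=4ks_{5}(h,k)$ through the parameter $y$, so that the corollary exhibits $Y(h,k)$ as the $y=0$ specialization, up to the explicit factor $\frac{1}{2k}$, of this generalized family.
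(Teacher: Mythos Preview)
Your proposal is correct and follows essentially the same approach as the paper: the corollary is stated immediately after recalling $Y(h,k)=4ks_{5}(h,k)$ and invoking Lemma~\ref{Lemma.2}, so the paper's argument is precisely the substitution you carry out. Your added remark linking the generalization observation to Theorem~\ref{Theorem6} is also in line with the surrounding discussion.
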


\begin{remark}
Elliptic Apostol-Dedekind sums have been studied by many authors. Bayad \cite%
{bayad2}, constructed multiple elliptic Dedekind sums as an elliptic
analogue of Zagier's sums multiple Dedekind sums. In \cite{SimsekKooKim},
Simsek et al. defined elliptic analogue of the Hardy sums. By using same
method in \cite{bayad2}, elliptic analogue of the sum $T_{m}(h,k)$ may be
defined. In this paper, we do not study on elliptic analogue of the sum $%
T_{m}(h,k)$. By using $p$-adic $q$-Volkenborn integral, in \cite%
{TKim2001Dedekind} and \cite{TKim2002RJMPVolkenborn}, Kim defined $p$-adic $%
q $-Dedekind sums. In \cite{simjkms2006}, \cite{simjmaaqDed}, \cite%
{YsimsekTJM2009}, \cite{SimsekNA2009}, we defined $q$-Dedekind type sums, $q$%
-Hardy-Berndt type sums and $p$-adic $q$-Dedekind sums. By using same
method, $p$-adic $q$-analogue of the sum $T_{m}(h,k)$ may be defined.
\end{remark}

\begin{acknowledgement}
This paper was supported by the Scientific Research Project Administration
of Akdeniz University.
\end{acknowledgement}

\end{document}